\tikzset{>=stealth}
\def\@tocline#1#2#3#4#5#6#7{\relax
  \ifnum #1>\c@tocdepth 
  \else
    \par \addpenalty\@secpenalty\addvspace{#2}%
    \begingroup \hyphenpenalty\@M
    \@ifempty{#4}{%
      \@tempdima\csname r@tocindent\number#1\endcsname\relax
    }{%
      \@tempdima#4\relax
    }%
    \parindent\z@ \leftskip#3\relax \advance\leftskip\@tempdima\relax
    \rightskip\@pnumwidth plus4em \parfillskip-\@pnumwidth
    #5\leavevmode\hskip-\@tempdima
      \ifcase #1
       \or\or \hskip 2em \or \hskip 2em \else \hskip 3em \fi%
      #6\nobreak\relax
    \dotfill\hbox to\@pnumwidth{\@tocpagenum{#7}}\par
    \nobreak
    \endgroup
  \fi}
\newtheorem{intro-thm}{Theorem}[]
\theoremstyle{plain}
\newtheorem{thm}{Theorem}[section]
\newtheorem{theorem}[thm]{Theorem}
\newtheorem{q}[thm]{Question}
\newtheorem{lemma}[thm]{Lemma}
\newtheorem{corollary}[thm]{Corollary}
\newtheorem{proposition}[thm]{Proposition}
\theoremstyle{definition}
\newtheorem{remark}[thm]{Remark}
\newtheorem{definition}[thm]{Definition}
\newtheorem{example}[thm]{Example}
\newcommand{\inj}{\hookrightarrow}
\newcommand{\tensor}{\otimes}
\newcommand{\Spec}{{\rm Spec \,}}
\renewcommand{\tilde}{\widetilde}
\newcommand{\sE}{{\mathcal E}}
\newcommand{\sF}{{\mathcal F}}
\newcommand{\sG}{{\mathcal G}}
\newcommand{\sL}{{\mathcal L}}
\newcommand{\sO}{{\mathcal O}}
\newcommand{\sX}{{\mathcal X}}
\newcommand{\sY}{{\mathcal Y}}
\newcommand{\G}{{\mathbb G}}
\newcommand{\N}{{\mathbb N}}
\renewcommand{\P}{{\mathbb P}}
\newcommand{\Z}{{\mathbb Z}}
\begin{document}

\title{Quasi-Affineness and the 1-Resolution Property}

\author{Neeraj Deshmukh}
\address{ Department of Mathematics, Indian Institute of Science Education and Research (IISER) Pune, Dr. Homi Bhabha road, Pashan, Pune 411008 India}

\email{neeraj.deshmukh@students.iiserpune.ac.in}

\author{Amit Hogadi}
\address{Department of Mathematics, Indian Institute of Science Education and Research (IISER) Pune, Dr. Homi Bhabha road, Pashan, Pune 411008 India}
\email{amit@iiserpune.ac.in}

\author{Siddharth Mathur}
\address{Department of Mathematics\\University of Arizona\\617 N. Santa Rita Ave.%
\\P.O. Box 210089\\ Tucson, AZ 85721-0089 USA\\%
}
\email{smathur@math.arizona.edu} 



\date{}

\begin{abstract}

We prove that, under mild hypothesis, every normal algebraic space which satisfies the $1$-resolution property is quasi-affine. More generally, we show that for algebraic stacks satisfying similar hypotheses, the 1-resolution property guarantees the existence of a finite flat cover by a quasi-affine scheme.
\end{abstract}

\maketitle

\section{Introduction}


An algebraic stack is said to have the resolution property if every quasi-coherent sheaf of finite-type is the quotient of a vector bundle. Although quasi-projective schemes always enjoy this property, it is not well-understood in general. The Totaro--Gross Theorem establishes that quasi-compact and quasi-separated (qcqs) algebraic stacks with affine stabilizers and the resolution property are quotients of quasi-affine schemes by $\text{GL}_n$ (see \cite{Tot}, \cite{GrossRes}). 

In this paper, we will consider a special case of the resolution property where every quasi-coherent sheaf of finite-type is generated by a single vector bundle in the following sense:

\begin{definition}

An algebraic stack $\sX$ is said to have the {\em 1-resolution property} if it admits a vector bundle $V$ such that every quasi-coherent sheaf of finite-type on $\sX$ is a quotient of $V^{\oplus n}$ for some natural number $n$. We will say that such a 
$V$ is \textit{special}.
\end{definition}

Hall and Rydh considered this property for algebraic stacks and posed the following question:

\begin{q}\label{q}(see \cite[7.4]{HR})
Does every algebraic stack with the 1-resolution property admit a finite flat covering by a quasi-affine scheme? Moreover, is every algebraic space with the 1-resolution property quasi-affine?
\end{q}

Our main result addresses this question and gives a positive answer under moderate hypotheses.

\begin{theorem}\label{T}
Let $\sX$ be a Noetherian, quasi-excellent (see \cite[7.2]{Tannaka}) and normal algebraic stack whose stabilizers at closed points are affine. Then we have:
\begin{enumerate}
\item If $\sX$ is representable by an algebraic space $X$, then $X$ has the 1-resolution property if and only if it is quasi-affine.
\item $\sX$ has the 1-resolution property if and only if there exists a finite flat covering $U\rightarrow \sX$ with $U$ a quasi-affine scheme.
\end{enumerate}
\end{theorem}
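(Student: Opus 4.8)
The two ``if'' implications are the easy ones. If $X$ is quasi-affine then $\sO_X$ is ample, so every finite-type quasi-coherent sheaf is globally generated and $V=\sO_X$ is special. For the ``if'' direction of (2), let $f\colon U\to\sX$ be a finite flat surjection with $U$ quasi-affine and put $V=f_*\sO_U$, a vector bundle because $f$ is finite locally free. Since $f$ is surjective, the unit section $1\in\Gamma(U,\sO_U)\subseteq f_*\sO_U$ is nonzero in every geometric fibre of $V$, hence is unimodular, so the evaluation $V^\vee\to\sO_\sX$, $\varphi\mapsto\varphi(1)$, is surjective. Now for any finite-type $\sF$ on $\sX$ the counit $f_*f^!\sF\to\sF$ of the finite-morphism adjunction $(f_*,f^!)$ is identified with the surjection $V^\vee\otimes\sF\to\sF$ coming from this evaluation; and $f^!\sF$ is finite-type on the quasi-affine scheme $U$, hence globally generated, so applying the exact functor $f_*$ to a surjection $\sO_U^{\oplus n}\twoheadrightarrow f^!\sF$ and composing with the counit yields $V^{\oplus n}\twoheadrightarrow\sF$. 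Thus $V$ is special.

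For the two ``only if'' implications, fix a special bundle $V$ of rank $r$ and record two consequences of specialness. First, $V$ is \emph{faithful}: an automorphism of a point of $\sX$ acting trivially on $V$ acts trivially, by $\Aut$-equivariance of the presentations $V^{\oplus n}\twoheadrightarrow\sF$, on every finite-type sheaf and hence on all of $\mathrm{QCoh}(\sX)$, so coherent Tannaka duality \cite{Tannaka} (applicable since $\sX$ is Noetherian, quasi-excellent with affine stabilizers) forces it to be trivial. Second, the stabilizer $G=\Aut_\sX(x)$ at a closed point $x$ is \emph{finite}: by faithfulness $x^*V$ is a faithful finite-dimensional $G$-representation, every finite-dimensional $G$-representation is the fibre at $x$ of a finite-type sheaf on $\sX$ (push forward along the closed immersion of the residual gerbe) and so is a quotient of a sum of copies of $x^*V$; but if $G$ were positive-dimensional it would contain a copy of $\G_m$ or $\G_a$, and then the regular representation of that subgroup --- a quotient of restrictions of $G$-representations --- would have infinitely many distinct (resp.\ arbitrarily long indecomposable) constituents, contradicting the fact that the constituents of quotients of sums of copies of a fixed module are bounded.

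\textbf{The crux (part (1), forward direction), which I expect to be the main obstacle.} Here one must show that a Noetherian, quasi-excellent, normal algebraic space $X$ carrying a special $V$ is quasi-affine, and one works with $X$ directly: the frame-bundle trick is of no help for (1), since the quotient of a quasi-affine scheme by a free $\mathrm{GL}_r$-action need not be quasi-affine (e.g.\ $\P^1=[(\mathbb{A}^2\smallsetminus 0)/\G_m]$), so specialness of $V$ on $X$ itself must be used. Specialness gives, for every $L\in\Pic(X)$, that $V^\vee\otimes L$ is globally generated (write $L$ as a quotient of $V^{\oplus n}$ and twist); taking determinants and letting $L$ vary then shows that $\det V$, $\det V^\vee$, all their powers, and every $V^\vee\otimes L$, are globally generated. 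I plan to feed this, together with the $1$-resolution property applied to ideal sheaves and structure sheaves of fat points, into a proof that $\sO_X$ is ample --- equivalently that $X\to\Spec\Gamma(X,\sO_X)$ is a quasi-compact open immersion. The delicate point is controlling the behaviour at infinity, and I expect to do this by a compactification argument: using quasi-excellence, apply Nagata to embed $X$ as a dense open of a normal proper algebraic space $\overline X$, extend $V$ to a coherent sheaf, deduce from the global generation statements that every line bundle on $X$ has degree $0$ on every complete curve contained in $X$, conclude that $X$ contains no complete curve, and finally that $\overline X\setminus X$ supports an ample effective divisor; then $X$ is an open of an affine scheme, hence quasi-affine (and in particular a scheme).

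\textbf{Part (2), forward direction.} Using faithfulness, the frame bundle $F:=\underline{\mathrm{Isom}}_{\sO_\sX}(\sO_\sX^{\oplus r},V)$, a $\mathrm{GL}_r$-torsor over $\sX$, has trivial inertia, hence is an algebraic space inheriting the Noetherian property, quasi-excellence and normality from $\sX$; and $F$ has the $1$-resolution property with $\sO_F$ special, since the affine projection $\pi\colon F\to\sX$ lets one write any finite-type sheaf on $F$ as a quotient of $\pi^*\sF$ for $\sF$ finite-type on $\sX$ (pass to a finite-type subsheaf of $\pi_*$) and then, via the tautological trivialization $\pi^*V\cong\sO_F^{\oplus r}$, as a quotient of a free sheaf. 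By part (1), $F$ is a quasi-affine scheme and $\sX=[F/\mathrm{GL}_r]$. Since the stabilizers of $\sX$ at closed points are finite, $\sX$ has finite inertia; combining this with the resolution property and the local structure theory of stacks with finite inertia (étale-locally on the coarse space, $\sX\cong[\Spec R/G]$ with $G$ finite) produces a finite flat covering $\sX'\to\sX$ with $\sX'$ a scheme (which one may take normal). Finally $\sX'$ inherits the $1$-resolution property: for finite flat $g\colon\sX'\to\sX$ and finite-type $\sG$ on $\sX'$, the sheaf $g_*\sG$ is finite-type on $\sX$, hence a quotient of $V^{\oplus n}$, and applying $g^*$ and the surjective (since $g$ is affine) counit $g^*g_*\sG\to\sG$ exhibits $\sG$ as a quotient of $(g^*V)^{\oplus n}$. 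Thus $\sX'$ is a Noetherian, quasi-excellent, normal scheme with the $1$-resolution property, so by part (1) it is quasi-affine, completing the proof.
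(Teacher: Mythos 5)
Your two ``if'' directions and the reduction of part (2) to a frame-bundle/quotient presentation are fine in spirit, but the two places you yourself identify as the crux are exactly where the proposal has genuine gaps. For part (1), the forward direction is only a plan, and the plan contains an unjustified (and in general false) chain of implications: from ``every line bundle on $X$ has degree $0$ on every complete curve'' you pass to ``$X$ contains no complete curve'' and then to ``$\overline X\setminus X$ supports an ample effective divisor, hence $X$ is quasi-affine.'' Absence of complete curves does not imply quasi-affineness (classical Goodman/Zariski-type examples: a normal projective surface minus a nef, non-semiample curve, or a K3 minus a $(-2)$-curve, has $\Gamma(U,\sO_U)=k$ and so is far from quasi-affine), so the $1$-resolution property would have to enter in an essential way at precisely this step, and it does not in your sketch. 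Moreover the Nagata compactification you invoke requires $X$ to be separated and of finite type over a Noetherian base: separatedness is not among the hypotheses (the paper has to prove it, via the valuation-ring gluing argument of Lemma \ref{separated}), and the theorem only assumes Noetherian and quasi-excellent, not finite type over anything, so the compactification step is not even available. The paper's route is entirely different: reduce to schemes using the presentation of a normal algebraic space as a finite group quotient of a normal scheme, prove the $\mathbb{Q}$-factorial case via ample families of line bundles (Proposition \ref{ample family}), handle a normal scheme whose singular locus lies in an affine open by producing a line bundle $\sL$ with $\sL\oplus\sO_X$ and $\sL^2\oplus\sO_X$ special, and then glue these cases with Zariski's Main Theorem applied to $X\to\Spec\Gamma(X,\sO_X)$.

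For part (2), the sentence ``combining this with the resolution property and the local structure theory of stacks with finite inertia \ldots produces a finite flat covering $\sX'\to\sX$ with $\sX'$ a scheme'' asserts, without proof, what is in fact the main content of the statement (and of Hall--Rydh's Question 7.4). \'Etale-locally on the coarse space one does get finite flat scheme covers from presentations $[\Spec R/G]$, but these cannot be glued into a global finite flat cover; this is why the paper needs all of Section 4: the Bertini-type Proposition \ref{FiberBertini}, the avoidance Lemma \ref{avoidance}, Moret-Bailly's theorem to find integral points, and the Kresch--Vistoli slicing of a projective bundle over $\sX$ (Theorem \ref{finite flat}, Corollary \ref{finiteflatgeneral}), together with descending an ample line bundle from the quasi-affine finite cover to the coarse space (using normality). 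You also use finiteness of inertia without establishing separatedness of $\sX$: finite stabilizers at closed points only give a quasi-finite diagonal, and the paper must still prove $\sX$ is separated (Lemmas \ref{schemecover} and \ref{L1}) before it even has a coarse moduli space to slice over. As written, therefore, both forward implications remain unproved.
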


Note that by \cite[Lem C.1]{Rydh11} any algebraic space which admits a finite flat covering by a quasi-affine scheme is automatically quasi-affine. Thus, although statement \ref{T}(1) can be treated as a special case of statement \ref{T}(2), we state them separately. Indeed, the strategy we adopt is to first establish \ref{T}(1), and then use it to prove \ref{T}(2).

The `if' direction is just descent for the 1-resolution property along finite faithfully flat maps (see \cite[Prop. 2.13]{GrossRes}) combined with the fact that on a quasi-affine scheme $U$ every coherent sheaf is globally generated (i.e, $\sO_U$ is special). So, this paper is concerned with proving the `only if' direction.

Recall the following theorem of Totaro and Gross.

\begin{theorem} \label{Grossaffine} \cite[Prop. 1.3]{Tot}, \cite[Theorem 1.1]{GrossRes}
Let $\sX$ be a qcqs algebraic stack whose stabilizer groups at closed points are affine. If $\sX$ has the resolution property, then the diagonal morphism $\sX\rightarrow \sX\times_{\Z}\sX$ is affine.
\end{theorem}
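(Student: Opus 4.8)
The plan is to follow Totaro's strategy: use the resolution property to exhibit $\sX$ as a quotient $[W/GL_n]$ with $W$ a quasi-affine scheme, and then deduce affineness of $\Delta_{\sX}$ formally. For the formal part, suppose $\sX\cong[W/GL_n]$ and let $f\colon\sX\to BGL_n$ be the classifying map of the $GL_n$-torsor $W\to\sX$. Base changing $f$ along the fppf cover $\Spec\Z\to BGL_n$ replaces it by $W\to\Spec\Z$, and correspondingly replaces the relative diagonal $\Delta_f\colon\sX\to\sX\times_{BGL_n}\sX$ by $\Delta_W\colon W\to W\times_{\Z}W$. Since a quasi-affine scheme has affine diagonal and affineness of morphisms is fppf-local on the target, $\Delta_f$ is affine. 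On the other hand $\Delta_{BGL_n}$ is affine — after the fppf cover $\Spec\Z\to BGL_n\times_{\Z}BGL_n$ it becomes $GL_n\to\Spec\Z$ — so the canonical map $\sX\times_{BGL_n}\sX\to\sX\times_{\Z}\sX$, being a base change of $\Delta_{BGL_n}$ along $f\times f$, is affine. As $\Delta_{\sX}$ factors as $\sX\xrightarrow{\Delta_f}\sX\times_{BGL_n}\sX\to\sX\times_{\Z}\sX$, it is a composite of affine morphisms, hence affine. So everything reduces to producing the presentation $\sX\cong[W/GL_n]$ with $W$ quasi-affine.

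The substantive step is to produce a vector bundle $E$ on $\sX$, say of rank $n$, whose classifying map $f\colon\sX\to BGL_n$ is representable by algebraic spaces — equivalently, such that the (affine, by hypothesis) stabilizer group $G_x$ at each closed point $x$ acts faithfully on the fibre $E_x$. I would construct $E$ as follows: using quasi-compactness, reduce to controlling faithfulness at finitely many closed points; at each such $x$, choose a finite-type quasi-coherent sheaf on $\sX$ whose restriction to the residue gerbe is the sheaf attached to a faithful representation of $G_x$ (possible since $G_x$ is affine), use the resolution property to dominate it by a vector bundle, which then also has faithful fibre at $x$, and finally take the direct sum of the finitely many bundles so obtained. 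By Chevalley's theorem, a faithful representation of an affine group scheme is automatically a \emph{tensor} generator: every finite-dimensional representation of $G_x$ is a subquotient of a finite direct sum of the $E_x^{\otimes a}\otimes(E_x^\vee)^{\otimes b}$. The resolution property then lets one upgrade this pointwise statement to the global assertion that every finite-type quasi-coherent sheaf on $\sX$ is a quotient of a finite direct sum of bundles $E^{\otimes a}\otimes(E^\vee)^{\otimes b}$. This is the technical heart of the argument, and I expect it — especially in the quasi-compact, quasi-separated (non-Noetherian) generality stated, where one must pass through Noetherian approximation — to be the main obstacle.

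Granting this, set $W:=\underline{\mathrm{Isom}}_{\sX}(\sO_{\sX}^{\oplus n},E)$, the frame bundle of $E$: it is an algebraic space, $\pi\colon W\to\sX$ is a $GL_n$-torsor (hence affine and faithfully flat), and $\sX\cong[W/GL_n]$. Because $\pi^*E\cong\sO_W^{\oplus n}$ is trivial, every $\pi^*\!\big(E^{\otimes a}\otimes(E^\vee)^{\otimes b}\big)$ is a finite free $\sO_W$-module. Now let $\sG$ be a finite-type quasi-coherent sheaf on $W$; the adjunction map $\pi^*\pi_*\sG\to\sG$ is surjective (as $\pi$ is faithfully flat and affine), and writing $\pi_*\sG$ as a filtered union of its finite-type subsheaves and using that $\sG$ is of finite type on the quasi-compact $W$, we see that $\sG$ is a quotient of $\pi^*\sG_0$ for some finite-type $\sG_0$ on $\sX$; by the previous paragraph $\sG_0$, hence $\pi^*\sG_0$, hence $\sG$, is a quotient of some $\sO_W^{\oplus m}$. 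Thus every finite-type quasi-coherent sheaf on $W$ is globally generated, i.e. $\sO_W$ is ample, so by the cohomological characterization of quasi-affine schemes (and its analogue for algebraic spaces) $W$ is quasi-affine. Combined with the first paragraph, $\Delta_{\sX}$ is affine.
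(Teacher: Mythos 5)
The statement you are proving is not proved in this paper at all: it is quoted from Totaro and Gross, so the substantive content of your write-up has to stand on its own. Your first and third paragraphs are fine as formal bookkeeping: the factorization of $\Delta_{\sX}$ through $\sX\times_{B\text{GL}_n}\sX$, the fppf-local verification that both maps are affine once $W$ is a quasi-affine (hence separated) scheme, and the argument that every finite-type sheaf on the frame bundle $W$ is globally generated once every finite-type sheaf on $\sX$ is a quotient of a finite sum of bundles $E^{\otimes a}\otimes(E^{\vee})^{\otimes b}$, are all correct (modulo citing the fact that a qcqs algebraic space on which every finite-type quasi-coherent sheaf is globally generated is a quasi-affine scheme). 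But this only shows that the theorem \emph{follows from} the existence of a presentation $\sX\cong[W/\text{GL}_n]$ with $W$ quasi-affine, i.e.\ from the hard direction of the Totaro--Gross theorem itself. The step you defer --- ``the resolution property then lets one upgrade this pointwise statement to the global assertion that every finite-type quasi-coherent sheaf on $\sX$ is a quotient of a finite direct sum of $E^{\otimes a}\otimes(E^{\vee})^{\otimes b}$'' --- is exactly the technical heart of \cite{Tot} and \cite{GrossRes} (the existence of a \emph{tensor generator}), and you give no argument for it; knowing that $E_x$ generates $\mathrm{Rep}(G_x)$ tensorially at each closed point does not by itself produce surjections of sheaves on $\sX$, and the resolution property only gives you surjections from \emph{some} vector bundle, with no a priori relation to the tensorial constructions on $E$. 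Flagging it as ``the main obstacle'' is accurate, but it means the proof is not complete: this is the missing idea, not a routine verification.

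There are also gaps earlier in your second paragraph, in the construction of $E$ itself. Reducing to finitely many closed points by quasi-compactness presupposes that the locus of points where the inertia acts faithfully on the fibre of a given vector bundle is \emph{open}; this is a genuine lemma (the kernel of $I_{\sX}\to \text{GL}(E)$ is a finite-type group space over $\sX$, and its image away from the identity section need not be closed), and without it faithfulness at closed points does not propagate to all points, which you need for $W$ to be an algebraic space. Likewise, extending a faithful representation of $G_x$ from the residual gerbe to a finite-type quasi-coherent sheaf on $\sX$ requires an extension result (standard in the Noetherian case, but the statement you are proving is for arbitrary qcqs stacks, where both this step and the rest of the argument require the approximation machinery you only allude to). Note also the small mismatch between ``subquotient'' (what the classical faithful-representation theorem gives) and ``quotient'' (what your global-generation argument on $W$ needs); pointwise this is repaired by dualizing, since the tensorial class is self-dual, but it is another detail absorbed into the unproven globalization step.
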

It is because of this theorem that all schemes and stacks considered in this paper will automatically have affine diagonal. The assumption that the stabilizers are affine is reasonable because as noted by Totaro, the resolution property for stacks with non-affine stabilizer groups is not interesting \cite[Remark (1) on p. 2]{Tot}. One could interpret the above theorem as saying that a stack with the resolution property is very close to being separated.
Although there are simple examples of schemes which admit the resolution property but are not separated (see Example \ref{DVR with double point}), a stack with the $1$-resolution property is necessarily separated (see Lemma \ref{L1}).\\


To prove \ref{T}(1), we first reduce to the scheme case by using the fact that every normal algebraic space $X$ is a quotient of a normal scheme $Y$ by a finite group \cite[16.6.2]{LMB}. The excellence hypothesis is inserted to ensure that the regular locus of this scheme is open. Note that this also means that $X$ has finite normalization. Next we prove the theorem, first in the case where $X$ is a regular scheme and next in the case where $X$ is a normal scheme whose singular locus is contained in an affine open. The general case is then deduced from this by using Zariski's main theorem \cite[8.12.6]{DG4}.

To obtain \ref{T}(2), we first establish that such a stack is separated. This ensures that it has a coarse moduli space \cite[1.3]{KeelMori},\cite[6.12]{RydhQuotient}. We then use the slicing strategy of Kresch-Vistoli in \cite{KreschVistoli} to produce a finite flat covering of $\sX$.\\

In characteristic zero, we can improve the result further by using approximation techniques \cite{RydhApproximation} and drop all the finiteness hypotheses of Theorem \ref{T}. More precisely, we have the following version of Theorem \ref{T}.

\begin{theorem}\label{char0} Let $\sX/\Spec \mathbb{Q}$ be a qcqs integral normal algebraic stack whose stabilizers at closed points are affine. Then we have:
\begin{enumerate}
\item If $\sX$ is representable by an algebraic space $X$, then $X$ has the 1-resolution property if and only if it is quasi-affine.
\item $\sX$ has the 1-resolution property if and only if there exists a finite flat covering $U\rightarrow \sX$ with $U$ a quasi-affine scheme.
\end{enumerate}
\end{theorem}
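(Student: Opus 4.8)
We deduce the theorem from Theorem~\ref{T} by Noetherian approximation. The `if' direction is unchanged from the Noetherian case: it combines descent of the $1$-resolution property along finite faithfully flat morphisms \cite[Prop.~2.13]{GrossRes} with the fact that every finite-type quasi-coherent sheaf on a quasi-affine scheme is globally generated. So assume $\sX$ has the $1$-resolution property with special bundle $V$ of rank $r$; we must exhibit the relevant quasi-affine objects.

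\textbf{Step 1: Noetherian models.} By Theorem~\ref{Grossaffine}, $\sX$ has affine diagonal, so by the approximation results of \cite{RydhApproximation} we may write $\sX=\varprojlim_{\lambda}\sX_\lambda$ as a cofiltered limit of algebraic stacks of finite type over $\Spec\mathbb{Q}$ with affine transition morphisms, each $\sX_\lambda$ having affine diagonal. Replacing $\sX_\lambda$ by the scheme-theoretic image of $\sX\to\sX_\lambda$ and then by its normalization (finite, since finite-type $\mathbb{Q}$-stacks are excellent, and through which $\sX\to\sX_\lambda$ factors because $\sX$ is normal), we may assume every $\sX_\lambda$ is integral, normal, and of finite type over $\mathbb{Q}$; in case (1), \cite{RydhApproximation} also lets us take the $\sX_\lambda=X_\lambda$ to be algebraic spaces. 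As affine diagonal forces affine stabilizers at closed points, each $\sX_\lambda$ now satisfies the hypotheses of Theorem~\ref{T}.

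\textbf{Step 2: descending the $1$-resolution property.} The crux is that \emph{some} $\sX_\lambda$ already has the $1$-resolution property. Being finitely presented, $V$ descends to a rank-$r$ bundle $V_{\lambda_0}$ on $\sX_{\lambda_0}$; write $V_\lambda$ for its pullback. For a \emph{fixed} coherent sheaf $\sF_\lambda$ on $\sX_\lambda$ this is routine: its pullback to $\sX$ is a quotient of some $V^{\oplus n}$, and since $\Hom(V^{\oplus n},-)$ commutes with the limit and surjectivity of a map between finitely presented sheaves spreads out, this surjection descends to $V_\mu^{\oplus n}\twoheadrightarrow\sF_\mu$ for $\mu$ large. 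The obstacle — and what I expect to be the technical heart of the argument — is that $\mu$ \emph{a priori} depends on $\sF_\lambda$, while there is a proper class of coherent sheaves to resolve; overcoming it requires a finiteness statement to the effect that specialness of $V_\lambda$ can be tested on a bounded amount of data. I would prove this by first spreading out the quasi-affineness of the frame bundle of $V$ (equivalently, of the classifying morphism $\sX\to B\mathrm{GL}_r$) to conclude that $\sX_\lambda$ has the resolution property for $\lambda$ large, and then, on such a Noetherian model with affine diagonal and the resolution property, reducing specialness of $V_\lambda$ to the existence of a surjection onto a single explicit generating sheaf; running the spreading-out argument above on that sheaf then yields that $V_\mu$ is special for $\mu$ large.

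\textbf{Step 3: transferring back.} Fix $\lambda$ so that $\sX_\lambda$ has the $1$-resolution property; the projection $p_\lambda\colon\sX\to\sX_\lambda$ is affine, being the limit of the affine transition morphisms. In case (1), Theorem~\ref{T}(1) shows $X_\lambda$ is a quasi-affine scheme, and an algebraic space affine over a quasi-affine scheme is again a quasi-affine scheme (affineness over a scheme forces the source to be a scheme, and ampleness of the structure sheaf pulls back along affine morphisms), so $X$ is quasi-affine. In case (2), Theorem~\ref{T}(2) gives a finite flat covering $U_\lambda\to\sX_\lambda$ with $U_\lambda$ quasi-affine; then $U:=U_\lambda\times_{\sX_\lambda}\sX\to\sX$ is again a finite flat covering, and $U\to U_\lambda$ is affine (a base change of $p_\lambda$), so by the same reasoning $U$ is a quasi-affine scheme, as desired. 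The deduction is thus complete modulo the descent statement of Step~2, which I expect to be where the real work lies.
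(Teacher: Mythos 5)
Your Steps 1 and 3 reproduce the paper's skeleton: by Theorem \ref{Grossaffine} the stack has affine diagonal, Rydh's approximation writes $\sX=\lim_\lambda \sX_\lambda$ with affine, schematically dominant bonding maps and finite-type models with affine diagonal, the frame bundle $\mathrm{Isom}(\mathcal{O}^{\oplus r},V)$ is quasi-affine and this spreads out to the models, one passes to a normalization, and finally Theorem \ref{T} is applied to the model and the conclusion is pulled back along the affine morphism $\sX\to\sX_\alpha$ (this is exactly Lemma \ref{char0approx} plus the short deductions given for \ref{char0}(1) and \ref{char0}(2)). The deduction in Step 3 is fine.

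The genuine gap is in Step 2, which you yourself flag as the heart of the matter but do not supply. Spreading out quasi-affineness of the frame bundle only gives that $V_\lambda$ is a tensor generator, i.e.\ that $\sX_\lambda$ has the resolution property; your proposed bridge --- that on such a Noetherian model ``specialness of $V_\lambda$ reduces to the existence of a surjection onto a single explicit generating sheaf'' --- is not justified, and as stated it cannot be the whole story: nothing in your Step 2 uses the characteristic-zero hypothesis, so the same argument would prove the theorem in characteristic $p$, which the paper explicitly does not know how to do (see the remark following Lemma \ref{char0approx}). The missing ingredient is precisely the characteristic-zero fact that a tensor generator is automatically a \emph{strong} tensor generator \cite[Cor 6.6]{GrossRes} (via linear reductivity of $\mathrm{GL}_n$), so that every finite-type quasi-coherent sheaf is a quotient of a finite direct sum of tensor powers of $V_\alpha$ and its dual, with no subquotients intervening. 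Granting this, the paper first replaces $V$ by the self-dual bundle $V\oplus V^{\vee}$, spreads out a single surjection $V^{\oplus m}\to V\otimes V$ to $V_\alpha^{\oplus m}\to V_\alpha\otimes V_\alpha$, and then bootstraps by induction to show every tensor power of $V_\alpha$ is a quotient of a direct sum of copies of $V_\alpha$, whence $V_\alpha$ is special. So your ``single explicit generating sheaf'' can indeed be taken to be $V_\alpha\otimes V_\alpha$ after self-dualization, but only once the strong-tensor-generator upgrade is in place; without identifying that step, the quantification over all coherent sheaves is not controlled and the argument does not close. (A minor further point: the paper normalizes only the final model $\sX_\alpha$, using that the schematically dominant affine map from the normal stack $\sX$ factors through the normalization; normalizing every $\sX_\lambda$ upfront, as you do, requires a small extra check that $\sX$ still maps affinely to the normalized models.)
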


This essentially reduces to showing that a special vector bundle remains special after approximation. We achieve this by using the fact that any tensor generator is also a strong tensor generator in characteristic zero (see \cite[Cor 6.6]{GrossRes}). This fails to hold in char $p$, as $\text{GL}_n$ is not linearly reductive.


We note at the outset that all our algebraic stacks and algebraic spaces will be quasi-separated. More precisely, they will have separated, quasi-compact diagonals (see \cite[D\'ef. 4.1]{LMB}). 
\\\\

\noindent{\bf Acknowledgements.} The first author was supported by the INSPIRE fellowship of the Department of Science and Technology, Govt.\ of India during the course of this work. We would like to thank Jack Hall for a number of helpful conversations, the idea of the proof of \ref{char0approx} is due to him. We are also grateful to David Rydh for his many helpful comments and suggestions. The proof of the non-noetherian version of Lemma \ref{separated} is due to him. We would also like to thank the referees for carefully reading an earlier version of this paper and for their helpful comments and corrections. 

\section{Examples and Counterexamples}
Before we present our main results we would like to mention a few examples and counterexamples of the 1-resolution property.

\begin{example}
If $G$ is a finite group over $k$ then we claim that $BG$ has the 1-resolution property. This follows from the fact that the map $\Spec(k)\rightarrow BG$ is a finite, faithfully flat cover and from \cite[Prop. 2.13]{GrossRes}.
\end{example}

\begin{example}\label{toruscase} Consider the multiplicative group $\G_m$ over a field $k$. The classifying stack $B\G_m$ does not have the 1-resolution property.

Note that any representation $V$ of $\G_m$ decomposes as a direct sum, $V\simeq \bigoplus_d \chi^{\oplus n_d}_d$, where $\chi_d$ are irreducible representations given by 

\begin{eqnarray*}
\chi_d :\G_m&\rightarrow&\G_m\\
t&\mapsto& t^d
\end{eqnarray*}

Assume (if possible) that $B\G_m$ has the 1-resolution property. Let $V$ be a special vector bundle on $B\G_m$. This means that $V$ is a $\G_m$-representation with the property that for any representation $W$ of $\G_m$, one can find a surjective map,
\begin{equation}\label{surj}
V^{\oplus m}\rightarrow W
\end{equation}
of $\G_m$-representations.
But since $V$ decomposes as a direct sum, $V\simeq \bigoplus_d \chi^{\oplus n_d}_d$, for any $\chi_r$ not appearing in the decomposition of $V$, equation \eqref{surj} fails. This contradicts our initial assumption.

\end{example}

\begin{example}Let $\G_a$ be the additive group over a field $k$.
We will show that $B\G_a$ does not have the 1-resolution property. \label{G_a}
The key idea is to identify an invariant $\eta(V)\in \N$, for every finite dimensional representation $V$ of $\G_a$ such that 
\begin{enumerate}
\item $\eta(V)= \eta(V^{\oplus r})\ \forall \ r\in \N$.
\item For all surjections $V\twoheadrightarrow W$, $\eta(W)\leq \eta(V)$. 
\item For every sufficiently large integer $m$, there exists a $V$ with $\eta(V)=m$.  
\end{enumerate}

We claim that the very existence of an invariant $\eta$ as above shows that $B\G_a$ cannot have the $1$-resolution property. Since otherwise, there would exist a vector bundle $V$ on $B\G_a$ such that every vector bundle $W$ is a quotient of $V^{\oplus r}$ for some $r\in \N$. This would gives us, by $(1)$ and $(2)$ above, that $\eta(W)\leq \eta (V)$ for all $W$, which contradicts $(3)$.

{\sc Case 1:} $char(k)=0$. In this case, finite dimensional representations of $\G_a$ are precisely pairs $(V,\rho)$, where $V$ is a finite dimensional $k$-vector space and $\rho$ is a nilpotent endomorphism of $V$ (see \cite[Example 4.9]{Abe} or \cite[pg 218, Example 2]{DemGab}). The map $\G_a\rightarrow \text{GL}(V)$ is given by $t\mapsto {\mathsf{exp}}(\rho t)$.
Thus any such representation $(V,\rho)$ admits a filtration by subrepresentations,
\[V\supseteq\rho(V)\supseteq\rho^2(V)\ldots\supseteq\rho^m(V)=\lbrace 0\rbrace,\]
with the property that the successive quotients are trival representations. We denote by $l(V)$ the smallest integer $m$ such that $\rho^m=0$. We claim that this invariant $l$ satisfies the above three properties. Given two representations, $(V,\rho)$ and $(W,\xi)$, we see that $l(V\oplus W)=max\lbrace \rho,\xi\rbrace$. Thus, $l$ satisfies conditions (1). That condition (2) is satisfied is straightforward. Moreover, by taking $\rho$ to be the $n\times n$ matrix with $1$'s on the superdiagonal (and all other entries zero), it is easy to construct $V$ such that $l(V)=n$ for every $n\in \N$. This shows condition (3).

{\sc Case 2:} $char(k)=p>0$. In this case, it is well known (see \cite[Example 4.9]{Abe} or \cite[pg 218, Example 2]{DemGab}) that a finite dimensional $\G_a$-representation $V$ is given by a sequence $\lbrace s_i\rbrace_{i\geq 0}$ of endomorphisms of $V$ such that, $s_i\circ s_j = s_j\circ s_i$, $s_i^p=0$, and $s_i=0$ for $i\gg 0$. The map $\G_a \to \text{GL}(V)$ is then given by 
$$t\mapsto \prod_i {\mathsf{exp}}(s_i t^{p^i})$$ where 
\[{\mathsf{exp}}(A):=1 + A + \frac{A^2}{2!}\ldots + \frac{A^{(p-1)}}{(p-1)!}.\]
We define our invariant to be the largest integer $n$ such that $s_n$ is non-zero, and denote it by $\gamma(V)$. Clearly, $\gamma(V\oplus W)=max\lbrace \rho,\xi\rbrace$ showing that condition (1) is satisfied. To see condition (3) for any $n\in \N$, take any non-zero nilpotent endomorphism $s$ of $V$, and consider the representation given by the sequence where $s_i=s \ \forall \ i\leq n$ and zero otherwise. Clearly $\gamma(V)=n$. We only need to show that condition (2) is satisfied. This is due to the following lemma:

\begin{lemma}
Let $V$ be a representation of $\G_a$ and $W\subseteq V$ a $k$-subspace. Then $W$ is a subrepresentation if and only if $s_i(W)\subseteq W$ for each $i$.
\end{lemma}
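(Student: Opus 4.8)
The plan is to translate the statement into a computation with the comodule structure. Recall that a $\G_a$-representation on $V$ is the same data as a $k$-linear coaction $\sigma\colon V\to V\otimes_k k[T]$, where $k[\G_a]=k[T]$ with comultiplication $T\mapsto T\otimes 1+1\otimes T$, and that in terms of the sequence $\{s_i\}$ attached to $V$ this coaction is obtained from the universal parameter, namely
\[
\sigma(v)=\Bigl(\prod_{i\ge 0}\mathsf{exp}(s_i\,T^{p^i})\Bigr)(v).
\]
Since only finitely many $s_i$ are nonzero, this product is finite, and since the $s_i$ commute, one may expand it termwise. Each factor contributes $\sum_{a=0}^{p-1}\tfrac{s_i^a}{a!}T^{a p^i}$, so a monomial in the expansion is indexed by a tuple $(a_i)_i$ with $0\le a_i\le p-1$ and almost all $a_i=0$, and equals $\bigl(\prod_i \tfrac{s_i^{a_i}}{a_i!}\bigr)T^{\sum_i a_i p^i}$. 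The key numerical observation is that the exponents $\sum_i a_i p^i$ are exactly the base-$p$ expansions of the nonnegative integers, so distinct tuples give distinct powers of $T$ and there is no cancellation. Thus
\[
\sigma(v)=\sum_{n\ge 0}c_n(v)\,T^n,\qquad c_n:=\prod_i\frac{s_i^{a_i}}{a_i!},
\]
where $(a_i)$ are the base-$p$ digits of $n$; in particular $c_0=\mathrm{id}$ and $c_{p^i}=s_i$.

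With this dictionary the lemma is a formality. By definition $W\subseteq V$ is a subrepresentation exactly when $\sigma(W)\subseteq W\otimes_k k[T]$, and since $V\otimes_k k[T]=\bigoplus_n V\,T^n$ with $W\otimes_k k[T]=\bigoplus_n W\,T^n$, this holds if and only if $c_n(W)\subseteq W$ for every $n\ge 0$. For the ``if'' direction: if $s_i(W)\subseteq W$ for all $i$, then each $c_n$, being a polynomial expression in the $s_i$, satisfies $c_n(W)\subseteq W$, so $W$ is a subrepresentation. For the ``only if'' direction: if $W$ is a subrepresentation then $c_n(W)\subseteq W$ for all $n$, and taking $n=p^i$ gives $s_i(W)=c_{p^i}(W)\subseteq W$.

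I do not expect a genuine obstacle here; the one thing to get right is the bookkeeping in the product expansion --- checking that commutativity of the $s_i$ permits a termwise expansion, that the resulting $T$-exponents are precisely the sums $\sum_i a_i p^i$ with $0\le a_i\le p-1$ (uniqueness of base-$p$ expansion), and hence that the coefficient of $T^{p^i}$ is $s_i$. If one prefers to avoid the comodule language, the same argument can be phrased by working over the $k$-algebra $R=k[T]$: the subspace $W\otimes_k R\subseteq V\otimes_k R$ must be stable under $\rho(T)=\prod_i\mathsf{exp}(s_i T^{p^i})\in\text{GL}(V\otimes_k R)$ for the universal parameter $T$, and reading off the coefficient of each power of $T$ yields $c_n(W)\subseteq W$, whence the conclusion exactly as above.
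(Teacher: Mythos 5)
Your proof is correct and takes essentially the same route as the paper: both translate the question into the comodule coaction $\sigma(v)=\bigl(\prod_i \mathsf{exp}(s_i t^{p^i})\bigr)(v)$ and compare coefficients of powers of $t$, the decisive point being that the coefficient of $t^{p^i}$ is exactly $s_i$. You merely make the base-$p$ bookkeeping explicit and spell out the easy ``if'' direction, which the paper's terser coefficient-comparison argument leaves implicit.
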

\begin{proof}
Let $\lbrace s_i\rbrace_{i\geq 0}$ be a representation of $V$. Let $W\subseteq V$ be a subrepresentation, and $\lbrace r_i\rbrace_{i\geq 0}$ be the corresponding endomorphisms of $W$. Then we have commutative diagram of comodule maps,

\begin{center}
\begin{tikzcd}
W\arrow[r]\arrow[d,"i"] & W\otimes k[t]\arrow[d,"i"]\\
V\arrow[r] & V\otimes k[t]
\end{tikzcd}
\end{center}
Comparing coefficients of powers of $t$, we see that $s_i|_W=r_i$. So, the $s_i$'s restrict to endomorphisms of $W$.
\end{proof}

\end{example}

\begin{example}\label{DVR with double point}
A DVR with a doubled point does not have the 1-resolution property. Let $R$ be a discrete valuation ring with function field $K$ and valuation $\nu$. Let $Y$ denote the scheme obtained by gluing two copies of $\Spec(R)$ along $\Spec(K)$. We will call this a {\it DVR with a doubled point}.

We will denote by $V_1$ and $V_2$ the two copies of $\Spec(R)$ in $Y$, and let $U:=\Spec(K)$ be the open point. Note that $U=V_1\cap V_2$.

Although we will show that $Y$ does not have the 1-resolution property, it is easy to see that every coherent sheaf on $Y$ is the quotient of some vector bundle, i.e, it has the resolution property. If $\sF$ is a coherent sheaf on $Y$, then on each of the $V_i's$, $\sF_{|V_i}:=\sF_i$ is just a finitely generated module over a PID. So, it decomposes into a free part and a torsion part. We have, $\sF_i\simeq \sF_{i,free}\oplus \sF_{i,tor}$, where $\sF_{i,free}$ and $\sF_{i,tor}$ are finitely generated free and torsion modules over $R$ respectively. Further, restricting $\sF_i$ to $U$ gives us the transition map,
\[A:\sF_1|_U\rightarrow\sF_2|_U\]
where $A$ is a $K$-linear map. Since the torsion part of $\sF_i$ vanishes on restriction to $U$, the transition map $A$ is completely determined by the free part of $\sF_i$. Hence, to produce a surjection from a vector bundle to $\sF$, we take an $n$ large enough so that on each $V_i$, there is a surjection,
$R^n\rightarrow \sF_{i,tor}$ onto the torsion part of $\sF_i$. This gives a map,
\[\sF_{i,free}\oplus R^n\rightarrow \sF_{i,free}\oplus\sF_{i,tor},\]
for each i. Glueing these maps on $U$ gives the required surjection. Thus, $Y$ has the resolution property.

To see that it does not have the 1-resolution property, assume  the contrary and let $\sE$ be a special vector bundle. Any vector bundle on $Y$ can be described by a pair $(n,A)$, where $n$ is the rank and $A\in \text{GL}_n(K)$ is the gluing map. In coordinates, let $A=(a_{ij})\in \text{GL}_n(K)$ be the gluing map $A: K^n\rightarrow K^n$ of $\sE$ on $U$. Let $\alpha:=min\lbrace \nu(a_{ij})\rbrace$ be the minimum of the valuations of the entries of $A$.  Consider a line bundle $\sL_{\lambda}$ such that the gluing map for $\sL_{\lambda}$ is given by $\lambda\in K^{\times}$ with $\nu(\lambda)<\alpha$.

As $\sE$ is a special vector bundle, there exists a surjection $\phi:\sE^{\oplus m}\rightarrow \sL_{\lambda}$, for some $m$. Restricting $\phi$ on each $V_i$, gives us the maps (written in coordinates as) $\phi_i:R^{N}\rightarrow R$, with $N:=nm$. Let $e_i$ be the chosen basis of $R^N$, and let $r_i:=\phi_1(e_i)$ and $s_i:=\phi_2(e_i)$. Note that $\nu(\phi_1(e_i))\geq 0$ for every $i$. After reordering the basis elements we may thus assume that $\nu(\phi_1(e_1)) = 0$. Otherwise, for any linear combination $\sum^n_{i=1}\alpha_i e_i$, the valuation of its image along $\phi_1$ would be
 $$\nu(\sum^n_{i=1}\alpha_i \phi_1(e_i))\geq min\lbrace \nu(\alpha_i e_i)\rbrace>0.$$ 
In this situation, the image would only generate an ideal of $R$, contradicting surjectivity.

Since $\phi$ is a homomorphism of sheaves, restricting $\phi_1$ and $\phi_2$ to $U$ gives the following commutative diagram,

\begin{center}
\begin{tikzcd}
K^{N}\arrow[r,"\phi_1"]\arrow[d,"A^{\oplus m}"]	&K\arrow[d,"\lambda"]\\
K^{N}\arrow[r,"\phi_2"]	&K
\end{tikzcd}
\end{center}

Note that along these maps, the chosen basis $e_i$ of $R^N$ restricts to a basis of $K^N$. So, by the above diagram we must have,
\[\lambda\phi_1(e_1)=\phi_2 (A^{\oplus m}(e_1)),\] 
or in coordinates, $\lambda r_1= \sum^n_{i=1} s_i a_{i1}$. But as $\nu(s_i)\geq 0$, comparing valuations of the left and right hand-side tells us that,
\[\nu(\sum^n_{i=1} s_i a_{i1})\geq min\lbrace \nu(s_i a_{i1})\rbrace\geq min\lbrace \nu(a_{i1})\rbrace>\nu(\lambda).\]
This contradicts commutativity of the above diagram.
\end{example}


\section{1-Resolution for Schemes}

The goal of this section is to prove Theorem \ref{T}(1). We begin by recalling (see \cite[Tag 07R3]{stacks-project}) that a locally Noetherian scheme $S$ is said to satisfy $J$-2 if for any locally finite type $S$-scheme $Y$, the regular locus of $Y$ is open in $Y$. All fields, $\Z$, Noetherian complete local rings, or schemes locally of finite type over these rings, give examples of $J$-2 schemes. All excellent schemes satisfy $J$-2.

We first note the following simple lemma.\begin{lemma}\label{lem:globallygenerated}
Let $X$ be a quasi-compact scheme with the $1$-resolution property. Let $\sE$ be a special vector bundle. Then $X$ is quasi-affine if and only if $\sE$ is globally generated. 
\end{lemma}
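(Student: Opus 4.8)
The plan is to prove both implications directly from the definitions, with the substantive direction being that global generation of a special bundle forces $X$ to be quasi-affine.

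\medskip

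\noindent\emph{The easy direction.} Suppose $X$ is quasi-affine. Then $X$ is a quasi-compact open subscheme of an affine scheme, so $\sO_X$ is globally generated, and indeed every quasi-coherent sheaf of finite type on $X$ is globally generated (this is standard for quasi-affine schemes: one extends to the affine closure and uses that modules over a ring are generated by global sections). In particular any special vector bundle $\sE$, being a finite-type quasi-coherent sheaf, is globally generated. (Alternatively, once we know $\sO_X$ is special, a special bundle is a quotient of $\sO_X^{\oplus n}$, hence globally generated.)

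\medskip

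\noindent\emph{The main direction.} Now suppose the special bundle $\sE$ is globally generated, say of rank $r$. The idea is to produce a quasi-affine morphism to an affine space (or to $\Spec \Z$) exhibiting $X$ as quasi-affine. Since $\sE$ is globally generated, a finite collection of global sections $s_1,\dots,s_N \in \Gamma(X,\sE)$ generates $\sE$; this gives a surjection $\sO_X^{\oplus N}\twoheadrightarrow \sE$ and hence a morphism $f\colon X\to \mathrm{Gr}(r,N)$ to a Grassmannian with $f^*(\text{universal quotient}) = \sE$. The key point is that this $f$ is in fact \emph{quasi-affine} — equivalently, an immersion up to the affine-morphism subtlety — because $\sE$ being \emph{special} means $\sO_X$ itself is globally generated by sections coming from quotients of powers of $\sE$, so that $f$ separates points and tangent vectors after composing with a Plücker-type embedding, or more cleanly: $\sE$ special implies $\sE$ is an ample family of line bundles' analogue, forcing the tautological map to an appropriate quasi-projective variety to be an immersion. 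I would argue as follows: to check $X$ is quasi-affine it suffices (by \cite[Tag 01QE] or the characterization via $\mathrm{Spec}\,\Gamma(X,\sO_X)$) to show the canonical morphism $j\colon X \to \Spec\Gamma(X,\sO_X)$ is an open immersion; by a standard criterion this holds once $\sO_X$ is ample, or once every finite-type quasi-coherent sheaf is globally generated. So it is enough to show that $\sE$ globally generated $+$ $\sE$ special $\Rightarrow$ every coherent sheaf on $X$ is globally generated. Given a coherent sheaf $\sF$, specialness gives a surjection $\sE^{\oplus m}\twoheadrightarrow \sF$; since $\sE$ is globally generated so is $\sE^{\oplus m}$, and a quotient of a globally generated sheaf is globally generated. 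Hence $\sF$ is globally generated. In particular $\sO_X$ is special (it is a quotient of $\sE^{\oplus m}$ which is globally generated, so... ) — more directly, every finite-type quasi-coherent sheaf is globally generated, which is exactly the condition for $X\to\Spec\Gamma(X,\sO_X)$ to be an open immersion, i.e. $X$ quasi-affine.

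\medskip

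\noindent\emph{Main obstacle.} The delicate point is the implication ``every finite-type quasi-coherent sheaf globally generated $\Rightarrow$ quasi-affine''. For quasi-compact quasi-separated schemes this is a known characterization (one shows the unit $X\to\Spec\Gamma(X,\sO_X)$ is a quasi-compact immersion by checking it is a monomorphism that is locally on the target an open immersion, using that ideal sheaves are globally generated to see the map is injective on points and that structure sheaves of closed subschemes are globally generated to control the image). I expect the write-up to lean on this standard fact, citing the Stacks Project, rather than reproving it; the only thing genuinely needed here is the trivial observation that specialness propagates global generation from $\sE$ to all coherent sheaves, together with the hypothesis that $\sE$ itself is globally generated. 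I would also remark that quasi-compactness of $X$ is used to reduce ``globally generated by possibly infinitely many sections'' to ``by finitely many'', which is harmless.
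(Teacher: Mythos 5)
Your proposal is correct and, once the Grassmannian digression is discarded, it is essentially the paper's own argument: both directions rest on the standard characterization that a quasi-compact scheme is quasi-affine if and only if every finite-type quasi-coherent sheaf is globally generated, and the converse direction propagates global generation from $\sE$ to an arbitrary finite-type sheaf $F$ via the surjection $\sE^{\oplus n}\to F$ supplied by specialness (the paper phrases this by first choosing a surjection $\sO_X^{\oplus m}\to\sE$, which is the same point as your ``a quotient of a globally generated sheaf is globally generated''). No gap; the speculative immersion/ampleness remarks in the middle are unnecessary and could simply be deleted.
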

\begin{proof}
A quasi-compact scheme is quasi-affine if and only if every quasi-coherent sheaf of finite-type on it is globally generated (see \cite[5.1.2]{DG2} and \cite[1.7.16]{DG4}). Thus, if $X$ is quasi-affine, then $\sE$ is globally generated. Conversely, if the special vector bundle $\sE$ is globally generated then by \cite[Tag 01BB]{stacks-project} there exists a surjection $\sO^{\oplus m} \to \sE$. Let $F$ is a quasi-coherent sheaf of finite-type then since $\sE$ is special there is a surjection $\sE^{\oplus n} \to F$. Thus we obtain a surjection 
\[(\sO^{\oplus m})^{\oplus n} \to \sE^{\oplus n} \to F\] 
as desired. \end{proof}

\begin{proposition} \label{tensor} Let $X$ be a scheme with a special vector bundle $E$. If $V$ is a vector bundle then $E \otimes V$ is also special. 
\end{proposition}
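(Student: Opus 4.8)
The plan is a short ``twist and untwist'' argument: to surject a given finite-type quasi-coherent sheaf $F$ by copies of $E\otimes V$, I would apply the hypothesis that $E$ is special not to $F$ itself but to the twist $F\otimes V^{\vee}$, and then remove the spurious factor $V^\vee\otimes V$ by contracting along the canonical evaluation pairing.

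In detail I would proceed as follows. Fix a finite-type quasi-coherent sheaf $F$ on $X$. Since $V$ is a vector bundle, so is $V^\vee$, and $F\otimes V^\vee$ is again finite-type and quasi-coherent. Because $E$ is special there are an integer $n$ and a surjection $E^{\oplus n}\surj F\otimes V^\vee$. Applying the functor $-\otimes_{\sO_X}V$ (which is exact, since $V$ is locally free; right-exactness already suffices) produces a surjection
\[(E\otimes V)^{\oplus n}\;=\;E^{\oplus n}\otimes V\;\surj\;F\otimes V^\vee\otimes V.\]
Now compose with $\mathrm{id}_F\otimes\mathrm{ev}$, where $\mathrm{ev}\colon V^\vee\otimes V\to\sO_X$ is the evaluation map. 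Since $\mathrm{ev}$ is an epimorphism (locally, if $V$ is free with basis $e_i$ and dual basis $e_i^\vee$, it sends $e_i^\vee\otimes e_j\mapsto\delta_{ij}$), right-exactness of $\otimes$ makes $\mathrm{id}_F\otimes\mathrm{ev}\colon F\otimes V^\vee\otimes V\to F$ an epimorphism as well. Composing the two surjections gives $(E\otimes V)^{\oplus n}\surj F$, with the very same $n$ that worked for $F\otimes V^\vee$. Hence $E\otimes V$ is special.

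There is essentially no obstacle beyond bookkeeping; the one point worth checking is the surjectivity of $\mathrm{ev}\colon V^\vee\otimes V\to\sO_X$, which holds exactly when $V$ has positive rank on every nonempty connected component of $X$. This is automatic whenever the statement has content: a special bundle $E$ already forces $\sO_X$ (and $\sO_{X_0}$ for every clopen $X_0\subseteq X$) to be a quotient of a power of $E$, so $E$ has positive rank everywhere, and were $V$ to drop to rank $0$ on some nonempty clopen piece then $E\otimes V$ would vanish there and could not be special at all. So one may harmlessly assume $V$ has positive rank throughout, and the argument above applies unchanged.
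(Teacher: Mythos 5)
Your proof is correct and follows essentially the same route as the paper: apply specialness of $E$ to $F\otimes V^{\vee}$, tensor the resulting surjection by $V$, and contract via the evaluation (trace) surjection $V^{\vee}\otimes V\to\sO_X$. Your added remark about the rank of $V$ is a harmless clarification of a degenerate case the paper leaves implicit.
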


\begin{proof}
Recall that there is a natural surjection $V \otimes V^{\vee} \to \mathcal{O}_X$ given by the trace map. Tensoring this surjection by an arbitrary quasi-coherent sheaf $F$ of finite-type we obtain an exact sequence
\[V \otimes V^{\vee} \otimes F \to F \to 0\]

Now to show that $E \otimes V$ is special, consider $F \otimes V^{\vee}$. Since $E$ is special there is a surjection $E^{\oplus m} \to F \otimes V^{\vee}$ and after tensoring this surjection by $V$ we obtain an exact sequence
\[(E \otimes V)^{\oplus m} \to F \otimes V^{\vee} \otimes V \to 0\]
combining this with the previous exact sequence shows that $E \otimes V$ is special. 
\end{proof}

\begin{example}

\label{quasi-proj} Let $X$ be a quasi-compact scheme with an ample line bundle $L$. Then $X$ has the 1-resolution property if and only if $X$ is quasi-affine. If $V$ is a special vector bundle, then it follows from \cite[Tag 01Q3 (8)]{stacks-project} that there exists an $n_0, k>0$ and a surjective morphism $((L^{\vee})^{\otimes n_0})^{\oplus k} \rightarrow V$. Thus, $(L^{\vee})^{\otimes n_0}$ is special. Since this sheaf is invertible, Proposition \ref{tensor} implies that $\mathcal{O}_X$ is also special. In other words every quasi-coherent sheaf of finite-type is globally generated and thus, $X$ is quasi-affine by Lemma \ref{lem:globallygenerated}.

\end{example}

\begin{proposition}\label{ample family} Let $X$ be a quasi-compact scheme admitting an ample family of line bundles. If $X$ has the $1$-resolution property then it is a quasi-affine scheme. 
\end{proposition}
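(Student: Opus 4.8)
The plan is to adapt the argument of Example \ref{quasi-proj} to an ample family $\{L_i\}$. Let $V$ be a special vector bundle. Since $X$ is quasi-compact with an ample family, every finite-type quasi-coherent sheaf on $X$ is a quotient of a finite direct sum of negative tensor powers of the $L_i$ (see \cite{stacks-project}); applying this to $V$ yields a surjection $E \twoheadrightarrow V$ with $E = \bigoplus_{j=1}^{n} L_{i_j}^{\otimes -e_j}$ and $e_j>0$. A surjection onto a special bundle makes its source special — compose with the surjections witnessing that $V$ is special — so $E$ is special. By Lemma \ref{lem:globallygenerated} it now suffices to show $E$ is globally generated, equivalently that $\mathcal{O}_X$ is special. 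For $n=1$ this is immediate: $E = L_{i_1}^{\otimes -e_1}$ is invertible and special, so $\mathcal{O}_X = E\otimes E^{\vee}$ is special by Proposition \ref{tensor}, and we conclude exactly as in Example \ref{quasi-proj}.

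For $n \ge 2$ the idea is to combine the specialness of $E$ with the geometry of the ample family. Proposition \ref{tensor} lets us twist $E$ by positive powers of the $L_{i_j}$ while keeping it special; e.g.
\[
E \otimes L_{i_1}^{\otimes e_1} \;=\; \mathcal{O}_X \;\oplus\; \bigoplus_{j=2}^{n} \bigl( L_{i_j}^{\otimes -e_j} \otimes L_{i_1}^{\otimes e_1} \bigr)
\]
is special, so $\mathcal{O}_X$ sits as a direct summand of a special bundle. Meanwhile, specialness of $E$ gives surjections $E^{\oplus m} \twoheadrightarrow L_i^{\otimes -e}$ for every $i$ and every $e>0$, and ampleness furnishes a finite cover of $X$ by affine opens $X_{s_\beta}$ with $s_\beta \in \Gamma(X, L_{i_\beta}^{\otimes n_\beta})$, on each of which coherent sheaves are globally generated and sections extend, after multiplying by powers of $s_\beta$, to global sections of appropriate twists. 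From this data one wants to assemble enough global sections of the negative powers $L_i^{\otimes -e}$ to conclude they (hence $E$) are globally generated — equivalently, to produce for each $x\in X$ a global function $f$ with $x\in X_f$ and $X_f$ affine, which is precisely quasi-affineness of $X$.

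I expect this last step to be the main obstacle. A direct summand of a special bundle need not be special, so the $\mathcal{O}_X$-summand above cannot simply be discarded, and the reduction must genuinely exploit the ample family. The delicate point will be to show that the negative powers $L_i^{\otimes -e}$ are globally generated uniformly over $X$, so that the local sections on the $X_{s_\beta}$ can be glued into global functions; that is where I would expect most of the effort to go.
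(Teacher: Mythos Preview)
Your proposal is incomplete, and you have correctly identified where the gap lies: for $n\geq 2$ you do not finish the argument, and the device of isolating $\mathcal{O}_X$ as a direct summand of a special bundle does not close it (as you note, direct summands of special bundles need not be special). The missing idea is not a refinement of this line of attack but a different organizing principle.

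The paper does not try to show that a single special bundle is globally generated. Instead it produces, for each member of the ample family, a \emph{different} special bundle that is globally generated on the corresponding affine open, and then assembles these. Concretely: fix sections $s_i\in H^0(X,L_i)$ with $X=\bigcup D(s_i)$ and each $D(s_i)$ affine. If $E$ is special then so is each twist $E_i:=E\otimes L_i^{\otimes n_i}$ (Proposition \ref{tensor}). Choose $n_i$ large enough that a generating set of sections of $E$ over $D(s_i)$ extends (after multiplying by $s_i^{n_i}$) to global sections of $E_i$; this gives $\phi_i:\mathcal{O}_X^{\oplus m_i}\to E_i$ surjective on $D(s_i)$. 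Now for an arbitrary finite-type $F$, specialness of $E_i$ yields a surjection $f_i:E_i^{\oplus k}\to F$, and the composite $f_i\circ\phi_i^{\oplus k}$ is surjective on $D(s_i)$. Taking the direct sum over $i$ produces a map from a free sheaf to $F$ that is surjective on every $D(s_i)$, hence everywhere. Thus every $F$ is globally generated and $X$ is quasi-affine.

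The point you were circling around---extending local sections to global ones after twisting---is exactly right, but the trick is to allow a \emph{different} twist for each affine piece rather than seeking one globally generated special bundle. Your preliminary reduction (replacing the special bundle $V$ by a direct sum of negative line-bundle powers) is valid but unnecessary: the argument works with the special bundle itself.
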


\begin{proof} 
Let $L_1,...,L_n$ be line bundles along with sections $s_i \in H^0(X,L_i)$ with the property that the $D(s_i)$ are affine schemes which jointly cover $X$, i.e. $\bigcup D(s_i)=X$. If $E$ is a special vector bundle on $X$, then for any integer $n_i$, the vector bundle $E_i=E \otimes L_i^{\otimes n_i}$ is also special by Proposition \ref{tensor}. By \cite[Theorem 7.22]{Torsten}, we may choose a $n_i$ so that a generating collection of sections of $E$ over $D(s_i)$ lifts to sections of $E_i$ over $X$ i.e. we have a map $\phi_i: \mathcal{O}_X^{\oplus m_i} \to E_i$ whose cokernel is supported on $V(s_i)$. Thus by taking direct sums over $i$ we obtain
\[\bigoplus \phi_i: \bigoplus_{i=1}^n (\mathcal{O}_X^{\oplus m_i}) \to \bigoplus_{i=1}^n E_i\]
Now we will show that any quasi-coherent sheaf of finite-type, $F$, is globally generated. Indeed, since each $E_i$ is special we have surjections $f_i: E_i^{\oplus k_i} \to F$. We may assume that these $k_i's$ are all the same integer $k$. Thus we have
\[(\bigoplus f_i) \circ (\bigoplus \phi_i)^{\oplus k}: (\bigoplus_{i=1}^n (\mathcal{O}_X^{\oplus m_i}))^{\oplus k} \to \bigoplus_{i=1}^n (E_i)^{\oplus k} \to F\]
To check that this is surjective, it suffices to restrict the morphism to the cover consisting of $D(s_i)$'s. On $D(s_i)$ surjectivity follows because $\phi_i$ and $f_i$ are each surjective there.
\end{proof}

\begin{example} Consider $Y=V(x) \cup V(y) \subset \mathbb{A}^2 \backslash \{0\}=X$ and let $Y \to \mathbb{A}^1 \backslash \{0\}=Z$ be the map which is the identity on $V(x)$ and on $V(y)$ it sends $a \mapsto \frac{1}{a}$. Let $W$ denote the non-normal, $2$-dimensional variety obtained by taking the pushout $W=X \amalg_{Y} Z$. One can check that $W$ is not quasi-affine (see \cite{
stacks-project} Tag 0271). We will show that it doesn't have the 1-resolution property by showing it has an ample family of line bundles. Observe that $V(x-y)$ and $V(x+y)$ are Cartier divisors on $W$ whose complements yield an affine cover. Indeed, both $V(x-y)$ and $V(x+y)$ lie in the locus where $X \to W$ is an isomorphism, thus they induce effective Cartier divisors on $W$. Moreover, their complements admits a finite cover by $X\backslash V(x-y)$ or $X \backslash V(x+y)$, each of which is affine. In conclusion, because $W$ is not quasi-affine and admits an ample family of line bundles it cannot have the $1$-resolution property.
\end{example}

\begin{q}\label{Pushout}
Let $X,Y,Z$ be quasi-affine schemes which are finite-type over an excellent Noetherian scheme. Suppose we have a closed immersion $Y \to X$ and a finite morphism $Y \to Z$, then does the resulting pushout $X \amalg_{Y} Z$ admit an ample family of line bundles?
\end{q}

\begin{remark} \label{nonnormal} A positive answer to the above question will settle Question \ref{q} for any algebraic space $W$ finite type over an excellent affine base. To see this, argue by Noetherian induction on $W$. We may also suppose that $W$ is reduced. Indeed, $W_{red}$ also has the $1$-resolution property and if it is quasi-affine then $W$ is as well. Denote the normalization of $W$ by $X$ and let the conductor subscheme in $W$ be denoted $Z$, pulling back the conductor along the normalization map $X \to W$ yields a pinching diagram i.e. $W \cong X \amalg_{Y} Z$ (see \cite[2.2.5]{CLO}). Since $X$ is normal and has the $1$-resolution property it is quasi-affine, by Theorem \ref{T}(1). Since $Y$ is closed in $X$, it is also quasi-affine. Further, $Z$ also has the $1$-resolution property, and so $Z$ is quasi-affine by the induction hypothesis. Then, $W$ will admit an ample family of line bundles. So, it must be quasi-affine by Proposition \ref{ample family}.
\end{remark}

We will first prove the Theorem \ref{T}(1) in the special case when $X$ is a regular scheme. Note that in this case we do not use the assumption that $X$ is an excellent scheme. 
\begin{corollary}\label{regular}
Let $X$ be a normal Noetherian $\mathbb{Q}$-factorial scheme (e.g. if $X$ is regular). If $X$ has the 1-resolution property, then $X$ is quasi-affine.
\end{corollary}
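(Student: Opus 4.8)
The plan is to exhibit an ample family of line bundles on $X$ and then invoke Proposition \ref{ample family}. Since the $1$-resolution property implies the resolution property, Theorem \ref{Grossaffine} shows that the diagonal of $X$ is affine; as $X$ is a scheme this says precisely that $X$ is separated. Replacing $X$ by one of its connected components — which is harmless, since a normal scheme has integral connected components, a Noetherian scheme has only finitely many of them, and quasi-affineness is preserved under finite disjoint unions — we may further assume $X$ is integral. Thus from now on $X$ is a normal, Noetherian, integral, separated, $\mathbb{Q}$-factorial scheme carrying a special vector bundle.

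The crux of the argument is the geometric observation that \emph{every affine open $U\subsetneq X$ has $X\setminus U$ of pure codimension one in $X$}. Granting this, I would choose a finite affine open cover $U_1,\dots,U_m$ of $X$ (available since $X$ is Noetherian and separated) and assume each $U_i\neq X$. Writing $X\setminus U_i=D_{i,1}\cup\cdots\cup D_{i,r_i}$ with the $D_{i,j}$ prime divisors, set $D_i=\sum_j D_{i,j}$ and use $\mathbb{Q}$-factoriality to pick $n_i>0$ with $n_iD_i$ Cartier. Then $n_iD_i$ is an effective Cartier divisor with $\mathrm{Supp}(n_iD_i)=X\setminus U_i$, so the tautological section $s_i$ of $\mathcal{O}_X(n_iD_i)$ has nonvanishing locus exactly $U_i$, which is affine. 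Hence $\{(\mathcal{O}_X(n_iD_i),s_i)\}_{i=1}^m$ is an ample family of line bundles, and Proposition \ref{ample family} forces $X$ to be quasi-affine.

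It then remains to establish the codimension-one claim. Let $D_1,\dots,D_r$ be the (finitely many, as $X$ is Noetherian) prime divisors of $X$ contained in $X\setminus U$, and put $X'=X\setminus(D_1\cup\cdots\cup D_r)\supseteq U$. By construction the closed subset $X'\setminus U$ of $X'$ contains no prime divisor of $X$, so all of its irreducible components have codimension $\ge 2$. Now cover $X'$ by finitely many affine opens $W_j$ of $X$: separatedness makes each $W_j\cap U$ affine, and normality of $W_j$ together with $\mathrm{codim}(W_j\setminus(W_j\cap U),W_j)\ge 2$ gives, via Hartogs's extension principle, $\Gamma(W_j,\mathcal{O}_X)=\Gamma(W_j\cap U,\mathcal{O}_X)$. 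Since an open immersion of affine schemes inducing an isomorphism on global sections is an isomorphism, $W_j\subseteq U$ for every $j$; therefore $X'=U$ and $X\setminus U=D_1\cup\cdots\cup D_r$ is pure of codimension one.

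I expect the genuinely delicate point to be the codimension-one statement, which however rests solely on the $S_2$ (Hartogs) property of normal schemes combined with separatedness. The $\mathbb{Q}$-factorial hypothesis is used only afterwards, to convert these divisorial complements into supports of Cartier divisors — i.e.\ into sections of line bundles; and, as the text notes, excellence plays no role here.
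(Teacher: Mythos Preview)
Your argument is correct and mirrors the paper's: both establish that $X$ carries an ample family of line bundles and then invoke Proposition~\ref{ample family}; the paper simply cites \cite[1.3]{SchroerAmple} for the existence of the ample family on a normal Noetherian $\mathbb{Q}$-factorial scheme with affine diagonal, whereas you unpack that citation and give the classical codimension-one/Hartogs proof directly. One small slip worth flagging: for schemes, ``affine diagonal'' is strictly weaker than ``separated'' (cf.\ Example~\ref{DVR with double point}, which has affine diagonal but is not separated), so Theorem~\ref{Grossaffine} does not by itself give separatedness. However, your proof never actually uses separatedness beyond ``the intersection of two affine opens is affine,'' and that is precisely what affine diagonal means --- so the argument goes through unchanged once you replace ``separated'' by ``has affine diagonal.''
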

\begin{proof}
Note that $X$ has affine diagonal \cite[Prop. 1.3]{Tot}. Hence, it has an ample family of line bundles by \cite[1.3]{SchroerAmple} Now, use Proposition \ref{ample family}.
\end{proof}

\begin{lemma}\label{separated}
Let $X$ be a qcqs algebraic space. If $X$ has the 1-resolution property, then $X$ is separated.
\end{lemma}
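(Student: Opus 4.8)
The plan is to combine the Totaro--Gross theorem with the valuative criterion of separatedness, and to reduce the non-separated case to the explicit computation carried out in the proof of Example~\ref{DVR with double point}. Since $X$ is a quasi-compact, quasi-separated algebraic space it has trivial (hence affine) stabilizers, and the $1$-resolution property implies the resolution property, so Theorem~\ref{Grossaffine} gives that $\Delta_X\colon X\to X\times_{\Z}X$ is affine. I will use two consequences of this: first, every morphism $T\to X$ (or $T\to U$ for $U\subseteq X$ open) with $T$ an affine scheme is affine, since it is the composite of the graph $T\to T\times X$ (a base change of $\Delta_X$) with the affine projection $T\times X\to X$; second, for an affine morphism $f$ the counit $f^\ast f_\ast\sF\to\sF$ is surjective for every quasi-coherent $\sF$.

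Suppose $X$ is not separated. By the valuative criterion of separatedness for algebraic spaces there are a valuation ring $R$ with fraction field $K$ and morphisms $a,b\colon\Spec R\to X$ with $a|_{\Spec K}=b|_{\Spec K}$ but $a\ne b$; when $X$ is Noetherian one takes $R$ to be a discrete valuation ring, and in general one first reduces to the case in which $\Spec R=\{\eta,s\}$ has exactly two points. Set $x_1:=a(s)$ and $x_2:=b(s)$. Then $x_1\ne x_2$: otherwise $a$ and $b$ would induce the same local homomorphism into $R$ (being equal after composing to $K$, into which $R$ injects) and would therefore agree. Moreover $x_1$ and $x_2$ are incomparable for specialization: if, say, $x_1$ were a proper specialization of $x_2$, pick $u$ in the maximal ideal of $\sO_{X,x_2}$ whose image in the localization $\sO_{X,x_1}$ is a unit; its image in $R$ via $b$ lies in the maximal ideal while its image via $a$ is a unit, yet both images agree in $K$ (each computes the value of $u$ at the common generic point), a contradiction. (On an algebraic space one runs this after passing to an \'etale chart.) Now glue: let $Y:=\Spec R\amalg_{\Spec K}\Spec R$ be the ``valuation ring with a doubled point'' of Example~\ref{DVR with double point}, with the two copies $V_1,V_2\subseteq Y$ of $\Spec R$ meeting in $V_1\cap V_2=\Spec K$, and let $h\colon Y\to X$ be induced by $a$ and $b$.

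Let $\sE$ be a special vector bundle on $X$, so $h^\ast\sE$ is a rank-$n$ bundle on $Y$ with a glueing matrix $A=(a_{ij})\in\GL_n(K)$; put $\alpha:=\min\{\nu(a_{ij}):a_{ij}\ne 0\}$, and fix $\lambda\in K^\times$ with $\nu(\lambda)<\alpha$, together with the line bundle $\sL_\lambda$ on $Y$ of glueing datum $\lambda$. The key step is that the counit $h^\ast h_\ast\sL_\lambda\to\sL_\lambda$ is surjective. This can be checked on the cover $\{V_1,V_2\}$: using that $x_1,x_2$ are incomparable, $N:=X\smallsetminus\overline{\{x_2\}}$ is an open containing $x_1$ (hence $a(V_1)$) but not $x_2$, so $h^{-1}(N)=V_1$ and $h|_{V_1}\colon V_1\to N$ is exactly $a$, which is affine; therefore $(h_\ast\sL_\lambda)|_N=(h|_{V_1})_\ast(\sL_\lambda|_{V_1})$ and the counit restricted to $V_1$ is the (surjective) counit of the affine morphism $h|_{V_1}$, and symmetrically over $V_2$. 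Next write the quasi-coherent sheaf $h_\ast\sL_\lambda$ as the filtered colimit of its finite-type quasi-coherent subsheaves $G_\beta$ (possible since $X$ is qcqs); as $h^\ast$ preserves colimits and $\sL_\lambda$ is of finite type, the surjection $h^\ast h_\ast\sL_\lambda\twoheadrightarrow\sL_\lambda$ must already factor through a surjection $h^\ast G_\beta\twoheadrightarrow\sL_\lambda$ for some $\beta$; set $F:=G_\beta$. Since $\sE$ is special there is a surjection $\sE^{\oplus m}\twoheadrightarrow F$, and applying $h^\ast$ and composing yields $(h^\ast\sE)^{\oplus m}\twoheadrightarrow h^\ast F\twoheadrightarrow\sL_\lambda$. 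But the computation in the proof of Example~\ref{DVR with double point}, applied verbatim to the bundle $h^\ast\sE$ on $Y$, shows that no surjection $(h^\ast\sE)^{\oplus m}\twoheadrightarrow\sL_\lambda$ exists when $\nu(\lambda)<\alpha$. This contradiction proves that $X$ is separated.

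I expect the surjectivity of $h^\ast h_\ast\sL_\lambda\to\sL_\lambda$ to be the crux: it is where the affineness of the diagonal and the incomparability of $x_1$ and $x_2$ are used essentially, and it genuinely fails for other sheaves in place of $\sL_\lambda$. A second, more routine, point is the reduction in the non-Noetherian case to a valuation ring with only two points, which can be done directly or avoided by Noetherian approximation.
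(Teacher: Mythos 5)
Your endgame (surjectivity of the counit $h^\ast h_\ast\sL_\lambda\to\sL_\lambda$, extraction of a finite-type subsheaf $F\subseteq h_\ast\sL_\lambda$ with $h^\ast F\twoheadrightarrow\sL_\lambda$, then the valuation computation of Example~\ref{DVR with double point}) is a nice alternative to the paper's finish, but the setup it rests on has a genuine gap: you run the valuative criterion directly on the algebraic space $X$ and assert that $x_1=a(s)$ and $x_2=b(s)$ are distinct and incomparable. Those are scheme-level facts (your own justifications go through local rings and the injectivity of $R\hookrightarrow K$), and they are \emph{false} for algebraic spaces, even with affine diagonal. Concretely, let $X$ be the bug-eyed line over a field $k$ with $\mathrm{char}\,k\neq 2$, i.e.\ the quotient of $\mathbb{A}^1_k$ by the \'etale equivalence relation identifying $x$ with $-x$ for $x\neq 0$: this is a qcqs algebraic space with affine diagonal which is not separated, and the two maps $\Spec k[t]_{(t)}\to\mathbb{A}^1_k\to X$ given by $t$ and $-t$ are distinct, agree on $\Spec k(t)$, yet send the closed point to the \emph{same} point of $|X|$. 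So $x_1=x_2$, your open $N=X\setminus\overline{\{x_2\}}$ contains neither closed-point image, $h^{-1}(N)\neq V_1$, and the counit argument collapses. The parenthetical ``pass to an \'etale chart'' does not repair this: the valuation-ring points do not lift along a chart without extending $R$, and after extension the two lifts can land on different sheets, which is exactly the phenomenon above. What is missing is the paper's first step, which you skipped: reduce to the scheme case via a finite surjection from a scheme (Rydh), using that the $1$-resolution property ascends along affine morphisms and that separatedness descends along finite surjections. Once $X$ is a scheme, your distinctness/incomparability arguments are correct (with the roles of the local rings stated the right way round: choose $u\in\mathfrak{m}_{x_1}$ whose image in $\sO_{X,x_2}$ is a unit).

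There is a second unproved step: the reduction, in the non-Noetherian case, to a valuation ring with exactly two points. It is load-bearing for you, because if $\Spec R$ has intermediate primes then points of $V_2$ other than the closed one map to \emph{generizations} of $x_2$, so $h^{-1}(X\setminus\overline{\{x_2\}})$ contains part of $V_2$ beyond the gluing locus and the identity $h^{-1}(N_1)=V_1$ fails. Localizing at the minimal prime over the element cutting out the non-agreement locus and dividing by the largest prime strictly below it works when an immediate predecessor exists, but primes of a valuation ring need not have one, so this reduction requires an argument; and the fallback ``avoided by Noetherian approximation'' is not available as stated, since descending the special vector bundle (or the $1$-resolution property) to a finite-type model is exactly the delicate point the paper only achieves in characteristic zero (Lemma~\ref{char0approx}). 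The paper's proof avoids both issues: after the scheme reduction it glues the two copies of $\Spec V$ along the full agreement locus $W$ (shown to be affine because the ideals of a valuation ring are totally ordered), proves directly that $Y\to X$ is affine by a case analysis, and concludes via Proposition~\ref{ample family} that the non-separated $Y$ would be quasi-affine; your counit route would be a legitimate variant of the final contradiction, but only after you supply the two reductions above.
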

\begin{proof}
We may assume that $X$ is a scheme. The algebraic space case follows immediately because $X$ admits a finite cover by a qcqs scheme (see \cite[Theorem B]{RydhApproximation}), the 1-resolution property ascends along affine morphisms and separatedness descends along finite surjections. 

If $X$ is a scheme which is not separated then by the valuative criterion, there exists a valuation ring $V$, and two distinct maps $f_1,f_2:\Spec(V)\rightarrow X$, lifting the diagram:

\begin{center}
\begin{tikzcd}
\Spec(K)\arrow[d]\arrow[r,"x"] &X\arrow[d]\\
\Spec(V)\arrow[ru, dotted, "f_1",shift right=1.5ex]\arrow[ru,dotted,"f_2"']\arrow[r] &S
\end{tikzcd}
\end{center}

\noindent where $\Spec(K)$ is the generic point of $\Spec(V)$.

Consider the pullback diagram
\begin{center}
\begin{tikzcd}
W \arrow[d]\arrow[r] &X\arrow[d]\\
\Spec(V) \arrow[r] &X \times X
\end{tikzcd}
\end{center}
\noindent and note that $W \to \Spec(V)$ is an immersion of finite-type. Since it contains $\Spec K$ it is also schematically-dominant and hence a quasi-compact open immersion. It follows that the complement $\Spec(V) \backslash W$ can be described as the vanishing of finitely many elements $s_1,...,s_n \in V$. Since $V$ is a valuation ring, the $s_i$ with the smallest valuation generates the ideal $(s_1,...,s_n)$ i.e. $\Spec (V) \backslash W$ is a Cartier divisor and $W=\Spec(V_s)$ is affine. 

Let $Y$ be the scheme obtained by gluing two copies of $\Spec(V)$ along $W$. Consider the map $f:Y\rightarrow X$, which restricts to $f_1$ and $f_2$ on each of the copies of $\Spec(V)$, respectively. We will show that $f$ is affine. Let $U$ be an affine open in $X$. If $f^{-1}(U)\subset \Spec(V)$ then it is a quasi-compact open and by the preceding discussion is affine.

So, we may write
\[f^{-1}(U)=\Spec(V_f) \amalg_W \Spec(V_g).\]

\noindent Let $p\in \Spec(V_f)\setminus W$ and $q\in \Spec(V_g)\setminus W$ be two points. Note that if such $p, q$ do not exist then $f^{-1}(U)$ is affine. Since $V$ is a valuation ring, its ideals form a totally ordered set. Thus, without loss of generality we may assume that $p$ is a specialization of $q$. Thus, $q$ also lies in $\Spec(V_f)$. Thus, we have a diagram,

\begin{center}
\begin{tikzcd}
\Spec(K)\arrow[d]\arrow[r,"x"] &U\arrow[d]\\
\Spec(V_q)\arrow[ru, dotted, "f_1",shift right=1.5ex]\arrow[ru,dotted,"f_2"']\arrow[r] &S
\end{tikzcd}
\end{center}

\noindent As $V_q$ is a valuation ring, this contradicts the valuative criterion. So, $f^{-1}(U)$ is indeed affine.

It follows that $Y$ has the 1-resolution property. But $Y$ also admits an ample family of line bundles (e.g take the line bundles corresponding to each copy of $\Spec(V) \backslash W$) so Lemma \ref{ample family} implies that $Y$ must be quasi-affine. This is absurd since $Y$ is not separated. It follows that $f_1=f_2$ and the result follows from the valuative criterion of separatedness.
\end{proof}

\begin{remark} If we restrict ourselves to Noetherian schemes of finite type, then the proof above can be simplified. Indeed, in that case we are able to use the Noetherian valuative criterion and assume that $V$ is a discrete valuation ring. In this setting, $W$ is simply the generic point $\Spec(K)$. Moreover, every proper open set of $Y$ is affine. And as $f(Y)$ cannot be contained in any affine open of $X$ (by the valuative criterion), $f$ is affine. This leads to a contradiction by Lemma \ref{ample family} (or by Example \ref{DVR with double point}).
\end{remark}

In the sequel, we will repeatedly rely on \cite[Prop. 2.8 (v)]{GrossRes}, and so we state it here for convenience.

\begin{proposition}\cite[Prop. 2.8 (v)]{GrossRes}\label{qf}
Let $X\rightarrow Y$ be a quasi-affine morphism of algebraic stacks. If Y has the 1-resolution property then so does X.
\end{proposition}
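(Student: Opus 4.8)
The plan is to prove the proposition by showing that the pullback $f^{*}V$ of a special vector bundle $V$ on $Y$ is a special vector bundle on $X$. Since the pullback of a vector bundle is again a vector bundle and $f^{*}$ commutes with finite direct sums, the only content is the generation statement: every finite-type quasi-coherent sheaf $G$ on $X$ should admit a surjection $(f^{*}V)^{\oplus n}\twoheadrightarrow G$ for some $n$.

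The whole argument reduces to one claim: \emph{for a quasi-affine morphism $f\colon X\to Y$ of qcqs algebraic stacks and any finite-type quasi-coherent sheaf $G$ on $X$, there exist a finite-type quasi-coherent sheaf $H$ on $Y$ and a surjection $f^{*}H\twoheadrightarrow G$.} Granting this, the proposition is immediate: since $Y$ has the $1$-resolution property there is a surjection $V^{\oplus n}\twoheadrightarrow H$, and applying the right-exact functor $f^{*}$ and composing yields
\[
(f^{*}V)^{\oplus n}=f^{*}(V^{\oplus n})\twoheadrightarrow f^{*}H\twoheadrightarrow G .
\]

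To prove the claim I would factor $f$ as $X\overset{j}{\hookrightarrow}\overline{X}\overset{g}{\longrightarrow}Y$ with $j$ a quasi-compact open immersion and $g$ affine, and handle the two halves separately. For $j$: since $j$ is quasi-compact and separated, $j_{*}G$ is quasi-coherent on $\overline{X}$; writing it as the filtered colimit of its finite-type quasi-coherent subsheaves $H_{i}$ and using that $j^{*}$ is exact with $j^{*}j_{*}\cong\mathrm{id}$, one sees that $G$ is the filtered union of the finite-type subsheaves $j^{*}H_{i}\subseteq G$, so $G=j^{*}G'$ for a single finite-type quasi-coherent $G'$ on $\overline{X}$ (here one uses that $G$ is of finite type and $X$ is quasi-compact). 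For $g$: the adjunction counit $g^{*}g_{*}G'\to G'$ is surjective (the standard fact that, for an affine morphism, a quasi-coherent sheaf is generated as a sheaf of modules by its pushforward); writing the quasi-coherent sheaf $g_{*}G'$ on $Y$ as the filtered colimit of finite-type quasi-coherent subsheaves $H_{k}$ and using that $g^{*}$ commutes with colimits, this surjection factors through $g^{*}H_{k}\to G'$ for some $k$, and that map is surjective because $G'$ is of finite type and $\overline{X}$ is quasi-compact. Setting $H:=H_{k}$ and applying the exact functor $j^{*}$ (noting $j^{*}g^{*}=f^{*}$ and $j^{*}G'=G$) produces the desired surjection $f^{*}H\twoheadrightarrow G$.

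The substantive input — and the step I expect to be the real obstacle — is the assertion that on the qcqs algebraic stacks at hand every quasi-coherent sheaf is the filtered colimit of its finite-type quasi-coherent subsheaves, which is what legitimizes both the extension of $G$ across $j$ and the descent of $G'$ to a finite-type sheaf on the base. For schemes and algebraic spaces this is classical, and for algebraic stacks it holds under the standing hypotheses here (qcqs with, by Theorem \ref{Grossaffine}, affine diagonal); the remaining manipulations are formal properties of right-exact functors and surjections. This reproduces \cite[Prop. 2.8 (v)]{GrossRes}.
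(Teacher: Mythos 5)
The paper gives no proof of this proposition at all---it simply cites \cite[Prop.~2.8(v)]{GrossRes}---and your argument is precisely the standard one behind that citation: pull back a special vector bundle and reduce to the key claim that every finite-type quasi-coherent sheaf on $X$ is a quotient of $f^{*}H$ for some finite-type quasi-coherent $H$ on $Y$, proved by factoring the quasi-affine morphism as a quasi-compact open immersion followed by an affine morphism and using that quasi-coherent sheaves are filtered colimits of their finite-type subsheaves. Your proof is correct; the only cosmetic point is that the appeal to Theorem \ref{Grossaffine} is unnecessary (its affine-stabilizer hypothesis is not assumed in the proposition), since the colimit fact you need holds for arbitrary qcqs algebraic stacks by Rydh's approximation results (and by Laumon--Moret-Bailly in the Noetherian case).
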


\begin{lemma}
Let $X$ be a Noetherian normal, J-2 scheme with the property that the singular locus $B$ of $X$ is contained in an affine open $U$. Then, if $X$ has the 1-resolution property, it is quasi-affine.
\end{lemma}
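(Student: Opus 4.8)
The plan is to reduce to the regular case treated in Corollary~\ref{regular} by a twisting argument that exploits the smallness of the singular locus. First, a few reductions. Since $X$ is normal and Noetherian it is a finite disjoint union of normal integral schemes, and each component is open and closed in $X$, so its inclusion is affine; by Proposition~\ref{qf} each component has the $1$-resolution property, and its intersection with $U$ is closed in $U$, hence affine and still a neighborhood of its singular locus. As a finite disjoint union of quasi-affine schemes is quasi-affine, I may assume $X$ is integral, and I may assume $U\subsetneq X$ and $B\neq\emptyset$ (otherwise the conclusion is immediate, or follows from Corollary~\ref{regular}). Because $X$ is $J$-2 the regular locus $X_{\reg}=X\setminus B$ is open, and the quasi-compact open immersion $X_{\reg}\hookrightarrow X$ is quasi-affine --- over each affine open of $X$ it restricts to a quasi-compact open subscheme of an affine scheme --- so Proposition~\ref{qf} shows $X_{\reg}$ has the $1$-resolution property, and Corollary~\ref{regular} (it is regular and Noetherian) shows $X_{\reg}$ is quasi-affine. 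In particular every coherent sheaf on $X_{\reg}$ is globally generated.

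Next I would establish two geometric inputs. By Lemma~\ref{separated} the scheme $X$ is separated, and since $X$ is normal its singular locus $B$ has codimension $\ge 2$. Using these, $X\setminus U$ is pure of codimension one: if $C$ were a component of codimension $\ge 2$ with generic point $\eta$, then for an affine open $W\ni\eta$ the intersection $W\cap U$ would be affine by separatedness, so the fibre product $(W\cap U)\times_W\Spec\mathcal{O}_{X,\eta}$ --- an open subscheme of $\Spec\mathcal{O}_{X,\eta}$ whose complement is the closed point --- would be affine, contradicting the fact that the punctured spectrum of the normal Noetherian local ring $\mathcal{O}_{X,\eta}$ of dimension $\ge2$ is not affine. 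Since $B\subseteq U$, the closed set $X\setminus U$ is contained in $X_{\reg}$, so $D:=(X\setminus U)_{\red}$ is a Weil divisor supported in the regular locus and is therefore a Cartier divisor; writing $\mathcal{L}:=\mathcal{O}_X(D)$ with its canonical section $s$, the open $U$ is exactly the non-vanishing locus of $s$.

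Now I would fix a special vector bundle $\mathcal{E}$ and show $\mathcal{E}\otimes\mathcal{L}^{\otimes M}$ is globally generated for $M\gg0$. On the affine $U$, $\mathcal{E}|_U$ is generated by finitely many sections; since $U=X\setminus\supp(D)$, the section-extension theorem \cite[Theorem 7.22]{Torsten} provides an $M\gg0$ for which these extend to sections of $\mathcal{E}\otimes\mathcal{L}^{\otimes M}$ over $X$ which still generate over $U$ (using $s^{\otimes M}$ to trivialize $\mathcal{L}^{\otimes M}$ there). On the other hand $\mathcal{E}\otimes\mathcal{L}^{\otimes M}$ is locally free, hence reflexive, and $B$ has codimension $\ge2$ in the normal scheme $X$, so restriction gives an isomorphism $\Gamma(X,\mathcal{E}\otimes\mathcal{L}^{\otimes M})\cong\Gamma(X_{\reg},\mathcal{E}\otimes\mathcal{L}^{\otimes M})$; as $X_{\reg}$ is quasi-affine, these global sections generate $\mathcal{E}\otimes\mathcal{L}^{\otimes M}$ over $X_{\reg}$. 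Since $X=U\cup X_{\reg}$, the vector bundle $\mathcal{E}\otimes\mathcal{L}^{\otimes M}$ is globally generated on $X$; it is again special by Proposition~\ref{tensor}, and therefore Lemma~\ref{lem:globallygenerated} gives that $X$ is quasi-affine.

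The step I expect to be the crux is the purity statement: it genuinely uses separatedness --- gluing the spectrum of a two-dimensional normal local ring to an affine scheme along its punctured spectrum produces a non-separated normal scheme with an affine open whose complement is a single codimension-two point --- and it is precisely what upgrades ``$U$ is an affine open'' to ``$U$ is the non-vanishing locus of a section of a line bundle on $X$'', without which there is nothing to twist by. The remaining care is in keeping the two extension mechanisms separate: twisting by $\mathcal{L}$ to push sections across $X\setminus U$, which costs nothing over $X_{\reg}$ because there the singular locus is in codimension $\ge2$ and $\mathcal{E}\otimes\mathcal{L}^{\otimes M}$ is reflexive.
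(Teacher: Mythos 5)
Your proof is correct. Up to the construction of the divisor it runs parallel to the paper's argument: the same decomposition $X=U\cup X_{\reg}$, quasi-affineness of $X_{\reg}$ via Proposition \ref{qf} and Corollary \ref{regular}, and the observation that $X\setminus U$ is pure of codimension one and lies in $X_{\reg}$, hence carries an effective Cartier divisor $D$ (the paper quotes \cite[Tag 0BCU]{stacks-project} for the purity, whereas you reprove it by hand via separatedness from Lemma \ref{separated} and the non-affineness of the punctured spectrum of a normal local ring of dimension $\geq 2$; your argument is sound, since normality gives depth $\geq 2$ at the generic point of a putative small component). Where you genuinely diverge is the endgame. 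The paper twists \emph{downward}: it produces a surjection $\bigl(\sO_X(-mD)\oplus\sO_X\bigr)^{\oplus r}\twoheadrightarrow \sE$, deduces that $\sL\oplus\sO_X$ and $\sL^2\oplus\sO_X$ are special for $\sL=\sO_X(-m_0D)$, and then needs a separate base-locus argument (its Step 3) to show $\sL^2\oplus\sO_X$ is globally generated before invoking Lemma \ref{lem:globallygenerated}. You twist the special bundle \emph{upward}: the generators over $U$ extend to $\sE\otimes\sO_X(MD)$ after multiplying by a power of the canonical section (\cite[Theorem 7.22]{Torsten}), the generators over $X_{\reg}$ extend across the codimension-$\geq 2$ singular locus by exactly the Hartogs/normality step the paper also uses, and then $\sE\otimes\sO_X(MD)$ is globally generated, still special by Proposition \ref{tensor}, so Lemma \ref{lem:globallygenerated} finishes at once. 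This shortcuts the paper's Steps 2--3 entirely, at the negligible cost of invoking Proposition \ref{tensor}; both routes rest on the same two section-extension mechanisms, but yours makes more transparent that the only role of the affine open $U$ is to supply the Cartier divisor one twists by.
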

\begin{proof}
Let $X_{\rm reg}$ denote the regular locus of $X$. Since the singular locus $B$ is contained in the affine open $U$, we have $X=X_{\rm reg}\cup U$. $X_{\rm reg}$ is quasi-affine by Corollary \ref{regular}. Let $Z_U:=X\setminus U$. As $U$ is affine, $Z_U$ is of pure codimension 1\cite[Tag 0BCU]{stacks-project}. Moreover, $Z_U$ lies entirely in $X_{\rm reg}$, and hence supports an effective Cartier divisor $D_U$. 

\noindent \underline{Step 1}: Let $\sE$ be a special vector bundle on $X$. We claim that for all $m\gg 0$ there exists an integer $r>0$ and a surjection $$ \big( \sO_X(-mD_U)\oplus \sO_X\big)^{\oplus r} \twoheadrightarrow \sE.$$

\noindent Since $U$ is affine, $\sE_{|U}$ is globally generated. We can choose sections $s_1,\ldots,s_{r_1}\in\Gamma(U,\sE)$ which generate $\sE$ on $U$. Similarly, as $X_{\rm reg}$ is quasi-affine, we can choose sections $t_1,\ldots,t_{r_2}\in\Gamma(X_{\rm reg},\sE)$ which generate $\sE$ on $X_{\rm reg}$. We may assume, without loss of generality, that $r_1=r_2=r$. By, normality of $X$, the restriction map $\Gamma(X,\sE)\rightarrow \Gamma(X_{\rm reg},\sE)$ is a bijection. Thus the sections $t_1,...,t_r$ extend uniquely to sections over the whole of $X$ giving us a map
$$ \sO_X^{\oplus r}\to \sE$$ which is surjective on $X_{reg}$.

By \cite[Theorem 7.22]{Torsten}, there exists an integer $m_U$, such that for each $m>m_U$, the $(s_D)^ms_i$'s (where $s_D$ is the section defining $D_U$) lift to global sections of $\mathcal{E}\tensor \sO_X(mD_U)$ thus giving a map 
 $$ \sO_X(-mD_U)^{\oplus r}\to \sE$$ which is surjective on $U$. 
Taking the direct sum of the above maps, gives a surjective map 
 $$  \big( \sO_X(-mD_U) \oplus \sO_X \big)^{\oplus r} \to \sE.$$

\noindent \underline{Step 2}: In this step we will show that there exists a line bundle $\sL$ such that both $\sL\oplus \sO_X$ and $\sL^2 \oplus \sO_X$ are special vector bundles.

This is straightforward,  since we can choose $\sL:=\sO_X(-m_0D_U)$, for some $m_0>m_U$. Then, the above step gives us surjections,

$$ \big( \sL\oplus \sO_X\big)^{\oplus r} \twoheadrightarrow \sE$$
$$ \big( \sL^2\oplus \sO_X\big)^{\oplus r} \twoheadrightarrow \sE$$
as required.

\noindent \underline{Step 3}: We now claim that $\sL^2\oplus \sO_X$ is globally generated. This will finish the proof of the theorem, thanks to Lemma \ref{lem:globallygenerated}.
Since $\mathcal{L}\oplus \sO_X$ is special, we have a surjection $$\Phi:(\mathcal{L}\oplus \sO_X)^{\oplus n}\twoheadrightarrow \mathcal{L}^2.$$ 

Hence, assume, if possible, that $p\in X$ is a point in the base locus of $\mathcal{L}^2$. Since the base locus of $\sL$ contains that of $\sL^2$, $p$ is also in the base locus of $\mathcal{L}$. The above surjection has as its summands, the  maps,
\[\mathcal{L}\overset{\phi_i}{\rightarrow}\mathcal{L}^2,\;\; \sO_X\overset{\psi_i}{\rightarrow}\mathcal{L}^2.\]

As $p$ is in the base locus, for all $i$, $\psi_{i,p}:\frac{\sO_{X,p}}{\mathfrak{m_p}}\rightarrow \frac{\mathcal{L}_p^2}{\mathfrak{m}_p\mathcal{L}_p^2}$ is the zero map.

Thus, we have an $i_0$ such that $\phi_{i_0,p}:\frac{\mathcal{L}_p}{\mathfrak{m}_p\mathcal{L}_p}\rightarrow\frac{\mathcal{L}_p^2}{\mathfrak{m}_p\mathcal{L}_p^2}$ is an isomorphism. Untwisting the map $\phi_{i_0}$ by $\mathcal{L}$, gives us a section of $\mathcal{L}$ which does not vanish at $p$, contradicting the initial assumption.
\end{proof}

\begin{theorem}\label{normal}
Let $X$ be a Noetherian normal scheme which is J-2. If $X$ has the 1-resolution property, then $X$ is quasi-affine.
\end{theorem}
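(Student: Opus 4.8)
The strategy is to reduce to the situation treated in the previous Lemma, where the singular locus lies in an affine open subscheme, and to carry out this reduction by means of Zariski's Main Theorem. First I would dispose of routine reductions. A Noetherian normal scheme is a finite disjoint union of its integral components, a finite disjoint union of quasi-affine schemes is quasi-affine, and the $1$-resolution property is inherited by open-and-closed subschemes since open and closed immersions are quasi-affine morphisms (Proposition \ref{qf}); hence we may assume $X$ is integral. By \cite[Prop. 1.3]{Tot} (see Theorem \ref{Grossaffine}) $X$ has affine diagonal, and by Lemma \ref{separated} it is separated. Let $B\subset X$ be the singular locus: it is closed because $X$ is J-2, and $\codim_X B\ge 2$ because $X$ is normal. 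The open subscheme $X_{\reg}$ has the $1$-resolution property (Proposition \ref{qf}) and is regular, so it is quasi-affine by Corollary \ref{regular}. By the previous Lemma it therefore suffices to exhibit an affine open subscheme of $X$ containing $B$.

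Set $A:=\Gamma(X,\sO_X)$. Since $X$ is normal and $B$ has codimension at least $2$, restriction identifies $A$ with $\Gamma(X_{\reg},\sO_{X_{\reg}})$; in particular $A$ is a normal domain, and because $X_{\reg}$ is quasi-affine the canonical morphism $X_{\reg}\to\Spec A$ is an open immersion. Let $\pi\colon X\to\Spec A$ be the canonical morphism. It is separated (its target is affine and $X$ is separated), it restricts to an isomorphism over the dense open $X_{\reg}\subseteq\Spec A$ — so $\pi$ is birational and $\pi^{-1}(X_{\reg})=X_{\reg}$ — and, after a preliminary reduction to the case where $A$ is Noetherian and $\pi$ is of finite type (for which one may invoke quasi-excellence as in Theorem \ref{T}, or produce finitely many affine charts of $X$ covering $B$ whose coordinate rings are of finite type over $A$, using the special vector bundle), it is of finite presentation. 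I would then apply Zariski's Main Theorem in the form \cite[8.12.6]{DG4}: the locus $U\subseteq X$ on which $\pi$ is quasi-finite is open, contains $X_{\reg}$, and over $U$ the morphism $\pi$ factors as an open immersion $U\hookrightarrow\overline{X}$ followed by a finite morphism $\overline{X}\to\Spec A$. Since $\overline{X}$ is finite over an affine scheme it is affine, so $U$ is quasi-affine, and $X\setminus U$ is a closed subset of $X$ contained in $B$. If $U=X$ then $X$ itself is quasi-affine and we are done; in any case it now suffices to prove that $\pi$ is quasi-finite at every point of $B$.

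This last point is the heart of the matter and the step I expect to cause the most trouble. Suppose $\pi$ is not quasi-finite at some $z\in B$, so that the fibre $\pi^{-1}(\pi(z))$ has dimension $\ge 1$ at $z$. Using that $\pi$ is separated, birational and of finite type over the normal scheme $\Spec A$, I would choose a Nagata compactification $X\hookrightarrow\overline{X}'$ over $\Spec A$ and replace it by its normalization, obtaining a proper birational morphism $\overline{f}\colon\overline{X}'\to\Spec A$ with $\overline{X}'$ normal, containing $X$ as a dense open subscheme with $\overline{f}|_X=\pi$ and $\overline{f}^{-1}(X_{\reg})=X_{\reg}$. The fibre of $\overline{f}$ over $\pi(z)$ is proper over a field and has positive dimension at $z$, hence it contains a complete integral curve through $z$; such a curve is projective and so carries an ample line bundle. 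The goal is to arrange that such a complete curve $C$ actually lies inside the open $X$: it would then inherit the $1$-resolution property by Proposition \ref{qf}, while a projective positive-dimensional scheme cannot be quasi-affine, contradicting Example \ref{quasi-proj}. Ensuring $C\subseteq X$ — that the exceptional fibre does not escape into $\overline{X}'\setminus X$ — is the delicate point; one expects to force this either by choosing the compactification carefully, or by running a Noetherian induction on the non-quasi-finite locus $X\setminus U$ (closed in $X$ and contained in $B$), enlarging $X$ by a suitable affine open at each stage and re-invoking the previous Lemma until one lands in the already-established case. Once $\pi$ is shown to be quasi-finite everywhere, $U=X$ and $X$ is quasi-affine, which completes the proof.
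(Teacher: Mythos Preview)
Your overall architecture matches the paper's: reduce to showing that the canonical morphism $\pi\colon X\to\Spec\Gamma(X,\sO_X)$ is separated and quasi-finite, then invoke Zariski's Main Theorem. Separatedness via Lemma~\ref{separated} is exactly what the paper does. The divergence is in how you establish quasi-finiteness of $\pi$, and here your argument has a genuine gap that you yourself flag.

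Your compactification-and-curve approach requires producing a complete curve $C$ in a positive-dimensional fibre of $\pi$ that lies \emph{entirely inside} $X$. There is no reason this should hold: the fibre of $\overline f$ over $\pi(z)$ is proper, but its intersection with the open $X\subset\overline X'$ need not contain any complete curve. The fallback you sketch (Noetherian induction, enlarging $X$ by an affine open at each stage) is not a well-posed induction, since you are not varying $X$ within a fixed ambient scheme and there is no evident decreasing invariant. So as written the quasi-finiteness step is incomplete.

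The paper's route is much shorter and uses an idea you have already set up but do not exploit. Cover $B$ by finitely many affine opens $U_1,\dots,U_n$ of $X$ and set $Y_i=X_{\reg}\cup U_i$. Each $Y_i$ has its singular locus contained in the affine open $U_i$, so the \emph{previous lemma} applies and $Y_i$ is quasi-affine. Now use the normality observation you already made: since $Y_i$ contains all codimension-one points of $X$, restriction gives $\Gamma(X,\sO_X)\simeq\Gamma(Y_i,\sO_{Y_i})$. Hence the canonical map $Y_i\to\Spec\Gamma(Y_i,\sO_{Y_i})=\Spec A$ is an open immersion, and it coincides with $\pi|_{Y_i}$. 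Since the $Y_i$ cover $X$, $\pi$ is locally an open immersion, hence quasi-finite (and locally of finite type, disposing of your worry about finiteness hypotheses for ZMT). No compactification, no curve, no induction.

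Note also that your sentence ``it therefore suffices to exhibit an affine open subscheme of $X$ containing $B$'' is a red herring: there is no reason a single such affine open exists, and the paper never seeks one.
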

\begin{proof}
We will show that the canonical map
\[\gamma: X\rightarrow \Spec(\Gamma(X,\mathcal{O}_X)),\]
is quasi-finite and separated. This will finish the proof as by Zariski's Main Theorem\cite[8.12.6]{DG4.3}, $\gamma$ will be quasi-affine, thus showing that $X$ itself is quasi-affine. 

That $\gamma$ is separated is clear from Lemma \ref{separated}. 

To show that $\gamma$ is quasi-finite, we let $X_{\rm reg}$ and $B$ denote the regular and singular loci of $X$, respectively. Let $B\subset\overset{n}{\underset{i=1}{\bigcup}} U_i$ be a covering, with $U_i$ affine in $X$, and $Y_i:=X_{\rm reg}\cup U_i$. Then, each $Y_i$ satisfies the hypothesis of the previous lemma, and hence is quasi-affine. Moreover, as each $Y_i$ contains $X_{\rm reg}$ it must contain all the codimension $1$ points and so we have 
\[\Gamma(X_{\rm reg},\mathcal{O}_X)\simeq \Gamma(Y_i,\mathcal{O}_X)\simeq \Gamma(X,\mathcal{O}_X)\]
so that $\Spec(\Gamma(Y_i,\mathcal{O}_X))= \Spec(\Gamma(X,\mathcal{O}_X))$. Thus, the canonical map
\[\gamma: X\rightarrow \Spec(\Gamma(X,\mathcal{O}_X))\]
is an open immersion on each $Y_i$. Hence, $\gamma$ is quasi-finite. 
\end{proof}

\begin{proof}[\textbf{Proof of Theorem \ref{T}(1)}]
Any Noetherian normal algebraic space is the quotient of a scheme by a finite group  \cite[16.6.2]{LMB}. So, we have a finite map, $U\rightarrow X$ with $U$ Noetherian normal. Since $X$ is quasi-excellent, $U$ also satisfies J-2. By Proposition \ref{qf}, $U$ has the 1-resolution property. Hence, it is quasi-affine by Theorem \ref{normal}. Then we can conclude that $X$ is quasi-affine using \cite[Prop. 4.7]{RydhQuotient}. Alternatively, as $U$ is quasi-affine, it is well-known that $X=U/G$ is a scheme, hence Theorem \ref{normal} applies and $X$ is quasi-affine as well.
\end{proof}

As mentioned in the introduction, we can do even better if we assume that $X$ is defined in characteristic zero. Let us begin with a lemma.

\begin{lemma} \label{char0approx}

Let $\sX/\Spec \mathbb{Q}$ be a qcqs integral normal algebraic stack whose closed points have affine stabilizers. Suppose that $\sX$ satisfies the 1-resolution property, then there is an affine morphism $\sX \to \sX_{\alpha}$ where $\sX_{\alpha}$ is a normal algebraic stack with affine diagonal which is finite-type over $\Spec \mathbb{Q}$ and has the $1$-resolution property. If $\sX$ is an algebraic space then $\sX_{\alpha}$ can be chosen to also be an algebraic space. 

\end{lemma}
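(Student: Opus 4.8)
The plan is to present $\sX$ as a cofiltered limit of finite-type $\Q$-stacks and push the special bundle down to one of them; the only delicate point is checking that it stays special. First, since the $1$-resolution property implies the resolution property and the stabilizers of $\sX$ at closed points are affine, Theorem~\ref{Grossaffine} shows $\sX$ has affine diagonal. By the approximation theorem for quasi-compact, quasi-separated algebraic stacks with affine diagonal \cite{RydhApproximation}, I would then write $\sX\simeq\varprojlim_\alpha\sX_\alpha$ as a cofiltered limit, along affine transition morphisms, of algebraic stacks $\sX_\alpha$ of finite type over $\Spec\Q$ with affine diagonal, with each projection $p_\alpha\colon\sX\to\sX_\alpha$ affine; if $\sX$ is an algebraic space the $\sX_\alpha$ may be taken to be algebraic spaces. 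Replacing each $\sX_\alpha$ by the normalization of $(\sX_\alpha)_{\red}$ — finite over $\sX_\alpha$ since $\Q$ is excellent, hence still of finite type over $\Q$ and with affine diagonal — and using that $p_\alpha$ factors through it, as $\sX$ is normal and integral, by a morphism that is again affine (the normalization map being finite, hence separated), I reduce to the case where every $\sX_\alpha$ is normal and integral.

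Next I would descend the generator. Fixing a special vector bundle $V$ on $\sX$ of rank $N$, and using that finitely presented quasi-coherent sheaves on $\sX$ form the filtered colimit of those on the $\sX_\alpha$ together with the fact that local freeness is a finitely presented condition, I obtain $V\simeq p_{\alpha_0}^\ast V_{\alpha_0}$ for some vector bundle $V_{\alpha_0}$ on some $\sX_{\alpha_0}$; write $V_\alpha$ for its further pullbacks for $\alpha\ge\alpha_0$. The crux is that $V_\alpha$ is special on $\sX_\alpha$ once $\alpha$ is large enough. Since $V$ is special it is in particular a tensor generator of $\sX$, which is to say that the classifying morphism $\sX\to B\mathrm{GL}(V)$ is quasi-affine; and — crucially, because $\sX$ lies over $\Spec\Q$ — by \cite[Cor.~6.6]{GrossRes} it is even a \emph{strong} tensor generator. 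This is the only place characteristic zero enters, and it amounts to the generating property of $V$ being witnessed by a finite amount of data: quasi-affineness of $\sX\to B\mathrm{GL}(V)$ together with finitely many surjections $V^{\oplus\bullet}\twoheadrightarrow V^{\otimes a_j}\otimes(V^\vee)^{\otimes b_j}$ (including the case $\sO_\sX$) from which, by tensoring up, one produces a surjection from a power of $V$ onto every $V^{\otimes a}\otimes(V^\vee)^{\otimes b}$. Each of these ingredients is stable under the approximation: quasi-affineness of finite-type morphisms descends along cofiltered limits with affine transitions (see, e.g., \cite[IV$_3$, 8.10.5]{DG4}), and surjections of finitely presented sheaves descend to some stage. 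Hence for $\alpha$ large, $V_\alpha$ is a tensor generator of $\sX_\alpha$ and each $V_\alpha^{\otimes a_j}\otimes(V_\alpha^\vee)^{\otimes b_j}$ is a quotient of a power of $V_\alpha$; tensoring these finitely many surjections and using right-exactness of $\otimes$ shows \emph{every} $V_\alpha^{\otimes a}\otimes(V_\alpha^\vee)^{\otimes b}$ is a quotient of a power of $V_\alpha$, so every finite-type sheaf on $\sX_\alpha$ — being a quotient of a finite direct sum of such tensor powers — is itself a quotient of a power of $V_\alpha$. Thus $V_\alpha$ is special, $\sX_\alpha$ has the $1$-resolution property, and taking this $\sX_\alpha$ as the asserted stack finishes the proof, the remaining properties ($\sX_\alpha$ normal, of finite type over $\Spec\Q$, with affine diagonal, an algebraic space if $\sX$ is, and $p_\alpha$ affine) being built into the construction.

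The main obstacle, and the only place real work is needed, is this last step. The property ``$V_\alpha$ is special'' quantifies over \emph{all} finite-type sheaves on $\sX_\alpha$ — of which there are far more than those pulled back from $\sX$ — so it is not a priori preserved when passing to the limit $\sX=\varprojlim_\alpha\sX_\alpha$. The characteristic-zero phenomenon that a tensor generator is automatically a strong tensor generator (ultimately, linear reductivity of $\mathrm{GL}_N$) is precisely what compresses these infinitely many surjectivity conditions into a finite package that does descend; this is why the hypothesis ``over $\Spec\Q$'' is essential here, as the failure in characteristic $p$ recorded after Theorem~\ref{char0} shows.
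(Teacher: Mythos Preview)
Your proof is correct and follows essentially the same strategy as the paper: approximate $\sX$ by finite-type stacks, descend the special bundle, use quasi-affineness of the frame bundle to retain tensor-generation at a finite level, invoke \cite[Cor.~6.6]{GrossRes} to upgrade to strong tensor-generation in characteristic zero, and then bootstrap specialness from finitely many descended surjections. The paper streamlines the last step by first replacing $E$ with the self-dual bundle $E\oplus E^{\vee}$, so that only the single surjection $E^{\oplus m}\twoheadrightarrow E\otimes E$ needs to descend and the inductive ``tensoring up'' becomes transparent; it also postpones the passage to the normalization until after the descent is complete (via \cite[Tag 0BB4]{stacks-project} and schematic dominance), which avoids the question of whether the normalized system still has $\sX$ as its limit.
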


\begin{proof}

By \cite[Theorem D]{RydhApproximation} we may write $\sX=\lim_{\lambda} \sX_{\lambda}$ where each $\sX_{\lambda}$ is an algebraic stack (or an algebraic space if $\sX$ is) of finite-type over $\Spec \mathbb{Q}$ and all the maps $\sX \to \sX_{\lambda}$ are affine and schematically dominant (see \cite[7.3]{RydhApproximation}). Note that because $\sX$ satisfies the 1-resolution property, Theorem \ref{Grossaffine} implies $\sX$ has affine diagonal. Thus, we may suppose that $\sX_{\lambda}$ has affine diagonal by \cite[Theorem C]{RydhApproximation} for all sufficiently large $\lambda$. Let $E$ be a special vector bundle on $\sX$ of rank $n$. Note that since $E$ is a tensor generator for $\sX$, the frame bundle of $E$ is a quasi-affine scheme (see \cite[6.4]{GrossRes}), we may even suppose that $E$ is self-dual by replacing $E$ with $E \oplus E^{\vee}$.

Now descend $E$ to some vector bundle $E_{\alpha}$ on a $\sX_{\alpha}$ so that $E=E_{\alpha}|_{\sX}$. Let $\text{Isom}(\mathcal{O}_{\sX_{\alpha}}^{\oplus n},E_{\alpha})=I_{\alpha}$. Since the formation of $I_{\alpha}$ is compatible with base change on $\sX_{\alpha}$, \cite[Tag 07SF(2)]{stacks-project} shows that
\[\text{Isom}(\mathcal{O}_{\sX}^{\oplus n}, E)=\lim_{\beta \geq \alpha} \text{Isom}(\mathcal{O}_{\sX_{\beta}}^{\oplus n}, E_{\beta})=\lim_{\beta \geq \alpha} I_{\beta}\]

\noindent where $E_{\beta}=E_{\alpha}|_{\sX_{\beta}}$. Thus since the left hand side is quasi-affine, \cite[Tag 07SR, 01Z5]{stacks-project} tells us that there is an $\alpha'$ so that for every $\beta \geq \alpha'$, $I_{\beta}$ is quasi-affine. Thus we may assume that the frame bundle of $E_{\alpha}$ has quasi-affine total space. Thus, $E_{\alpha}$ is a tensor generator (see \cite[6.4]{GrossRes}). Moreover, since $E$ is self-dual, we may suppose that $E_{\alpha}$ is as well. Since we are in characteristic $0$, it follows that $E_{\alpha}$ is a strong tensor generator (see \cite[6.6]{GrossRes}). We will now show that $E_{\alpha}$ can be chosen to be special. 

Observe that there is an $m>0$ so that there is a surjection $f: E^{\oplus m} \to E \otimes E$ by the specialness of $E$. Thus, for a sufficiently large index $\alpha$, we get a surjection $f_{\alpha}: E_{\alpha}^{\oplus m} \to E_{\alpha} \otimes E_{\alpha}$. Since $E_{\alpha}$ is a strong tensor generator which is self-dual, to show that $E_{\alpha}$ is special it suffices to show that every tensor power of $E_{\alpha}$ is a quotient of $E_{\alpha}^{\oplus N}$ for some $N>0$. We already have a surjection $f_{\alpha} \otimes \text{id}: E_{\alpha}^{\oplus m} \otimes E_{\alpha} \to E_{\alpha} \otimes E_{\alpha} \otimes E_{\alpha}$. However, the domain can be written as $(E_{\alpha}\otimes E_{\alpha})^{\oplus m}$ and so there is a surjection 
\[(f_{\alpha} \otimes \text{id}) \circ f_{\alpha}^{\oplus m}: (E_{\alpha}^{\oplus m})^{\oplus m} \to (E_{\alpha} \otimes E_{\alpha})^{\oplus m}=E_{\alpha}^{\oplus m} \otimes E_{\alpha} \to E_{\alpha} \otimes E_{\alpha} \otimes E_{\alpha}\]

\noindent By induction, we see that any polynomial expression in $E_{\alpha}$ is the quotient of an $E_{\alpha}^{\oplus M}$, i.e. that $E_{\alpha}$ is special. 

Note that \cite[Tag 0BB4 (4)]{stacks-project} shows that the schematically dominant morphism $f_{\alpha}: \sX \to \sX_{\alpha}$ factors through the normalization of $\sX_{\alpha}$:
 \[\sX \to (\sX_{\alpha})^{\nu}\]
Observe that $(\sX_{\alpha})^{\nu} \to \sX_{\alpha}$ is affine and hence $E_{\alpha}$ restricts to a special vector bundle on $(\sX_{\alpha})^{\nu}$. Since $\sX \to (\sX_{\alpha})^{\nu}$ is an affine morphism, we may conclude. 
\end{proof} 

\begin{proof}[\textbf{Proof of Theorem \ref{char0}(1)}]

By Lemma \ref{char0approx} there is an affine morphism $f_{\alpha}: X \to X_{\alpha}$ where $X_{\alpha}$ is a normal algebraic space, finite type over $\Spec \mathbb{Q}$ which has the $1$-resolution property. By \ref{T}(1) $X_{\alpha}$ is a quasi-affine scheme. Since $f_{\alpha}$ is affine it follows that $X$ is quasi-affine as well. 
\end{proof}

\begin{remark}
The above proof breaks down in characteristic $p$, because in that case $E_{\alpha}$ is not automatically a strong tensor generator. However, if the structure group of $E_{\alpha}$ is linearly reductive then the proof above works.
\end{remark}

\section{Finite Flat Covers}

In this section we establish the existence of a schematic finite flat cover of a stack $\sY$ with the $1$-resolution property. The first step is to produce a projective and flat morphism $f: \sX \to \sY$ where $\sX$ is generically a scheme and then pass to the associated map on coarse moduli spaces $\bar{f}: X \to Y$. We repeatedly replace $X$ with hyperplane sections in such a way that the resulting $X$ factors through $\sY$ where $X \to \sY$ is finite and flat. In order to ensure this, we replace $X$ with a hyperplane section not containing any of the components of the fibers of $\bar{f}$. Towards this end, we begin with the following Bertini-type result.

\begin{proposition} \label{FiberBertini}

Fix a Noetherian ring $R$ of finite dimension and assume that $R$ is universally catenary and Jacobson (e.g. $R$ is the ring of integers in a number field $K$). Let $X$ and $Y$ be algebraic spaces of finite-type over $\Spec R$ which are equipped with a proper morphism $f: X \to Y$ with constant fiber dimension $r>0$. If $X \to \mathbb{P}^n_{Y}$ is an immersion then there is a integer $d_0>0$ so that for every $d \geq d_0$ there is a open subset $U_d \subset H^0(\mathbb{P}^n_{R}, \mathcal{O}(d))=\mathbb{A}^N_{R}$ mapping surjectively to $\Spec R$ with the following property: for every geometric point of this open subset $\Spec \bar{k} \to U_d$, the corresponding hypersurface $H \subset \mathbb{P}^n_{\bar{k}}$ yields a morphism $X_{\bar{k}} \cap (H \times_{\bar{k}} Y_{\bar{k}}) \to Y_{\bar{k}}$ with constant fiber dimension $r-1$.

\end{proposition}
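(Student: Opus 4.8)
The plan is to reduce to a classical Bertini statement over a field and then spread it out over $\Spec R$ using the Jacobson hypothesis. First I would fix a relatively very ample $\mathcal{O}_{X}(1)$ coming from the immersion $X \hookrightarrow \mathbb{P}^n_Y$, and consider the universal hypersurface of degree $d$: inside $\mathbb{P}^n_R \times_R \mathbb{A}^N_R$ (with $\mathbb{A}^N_R = H^0(\mathbb{P}^n_R, \mathcal{O}(d))$) sits the incidence variety $\mathcal{H}_d$, and pulling back along $X \to \mathbb{P}^n_Y \to \mathbb{P}^n_R$ gives a family $\mathcal{X}_d \subset X \times_R \mathbb{A}^N_R$ of hyperplane sections, together with the induced map $\mathcal{X}_d \to Y \times_R \mathbb{A}^N_R \to \mathbb{A}^N_R$. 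The key point is to understand, for a geometric point $\Spec \bar{k} \to \mathbb{A}^N_R$ lying over $\Spec R$, the fiber dimension of $X_{\bar k} \cap (H \times_{\bar k} Y_{\bar k}) \to Y_{\bar k}$.

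The heart of the matter is the fiberwise Bertini statement: if $Z$ is an equidimensional scheme of dimension $r > 0$ which is projective over a field $\ell$ via an immersion into $\mathbb{P}^n_\ell$, then for $d$ large the generic degree-$d$ hypersurface $H$ meets $Z$ in a scheme all of whose components have dimension exactly $r-1$ (it cannot drop further because an ample divisor cannot contain a whole positive-dimensional component, and $H \cap Z \neq \emptyset$ once $d$ is large enough that $\mathcal{O}_Z(d)$ has a section vanishing somewhere; here one must be a little careful, using that $Z$ has finitely many components and running the argument componentwise). Applying this to each component of each fiber $f^{-1}(y) \cong X_{\bar k} \cap (\text{pt} \times Y_{\bar k})$ over a geometric point $y \in Y_{\bar k}$: for the section $X_{\bar k} \cap (H \times Y_{\bar k})$ to have constant fiber dimension $r-1$ over $Y_{\bar k}$, I need $H$ to avoid containing any component of any fiber, and to meet every fiber. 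The locus of "bad" $H$ (those containing some fiber component) is closed in $\mathbb{A}^N_{\bar k}$ of positive codimension once $d \gg 0$; similarly the locus where $H$ misses some fiber is closed of positive codimension. This uses the constancy of the fiber dimension $r$ together with properness of $f$ to get uniform bounds on $d_0$ (via, e.g., Noetherian generic flatness applied to the relevant Hom-sheaves, or a Hilbert-polynomial argument), so that a single $d_0$ works over all of $\Spec R$.

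To globalize over $\Spec R$: the condition "$\mathcal{X}_d \to Y \times_R \mathbb{A}^N_R$ has fiber dimension $\le r-1$ at a given point" is, by upper semicontinuity of fiber dimension (\cite[Tag 0D4H]{stacks-project} or Chevalley), an open condition on the source, and pushing forward along the proper map $\mathcal{X}_d \to \mathbb{A}^N_R$ gives an open locus $U'_d \subset \mathbb{A}^N_R$ over which every geometric fiber has the desired dimension $\le r-1$; intersecting with the open locus where $H$ meets every fiber of $f$ pins the dimension to exactly $r-1$. It remains to check $U_d := U'_d$ surjects onto $\Spec R$: because $R$ is Jacobson, it suffices to check $U_d$ meets the fiber over every closed point $\mathfrak{m} \subset R$, and for such a point the fiberwise Bertini statement over $k(\mathfrak{m})$ (which is a field, finite type over the prime field if $R$ is finitely generated, but in any case a field over which classical Bertini for dimension applies since we only need existence of one good hyperplane, not smoothness) shows $U_d \cap \mathbb{A}^N_{k(\mathfrak{m})} \neq \emptyset$. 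The main obstacle I anticipate is obtaining a uniform $d_0$ independent of the point of $\Spec R$ and of the point of $Y$ — i.e. bounding the degree needed so that no fiber-component is contained in the hypersurface and no fiber is missed — which I expect to handle by a Hilbert-polynomial/boundedness argument using that $f$ is proper with constant fiber dimension $r$ and $\Spec R$ is Noetherian of finite dimension.
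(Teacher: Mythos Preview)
Your overall incidence-variety strategy is natural, but there is a genuine gap at the step where you ``push forward along the proper map $\mathcal{X}_d \to \mathbb{A}^N_R$.'' That map is \emph{not} proper: $\mathcal{X}_d \subset X \times_R \mathbb{A}^N_R$ and while $f:X\to Y$ is proper, $Y$ is only of finite type over $\Spec R$, so $X\to \Spec R$ (and hence $\mathcal{X}_d \to \mathbb{A}^N_R$) need not be proper. Thus the bad locus in $Y\times_R \mathbb{A}^N_R$ (which \emph{is} closed, by properness of $f$) only has constructible image in $\mathbb{A}^N_R$, and you cannot directly take its open complement. Your Jacobson argument for surjectivity is fine \emph{provided} $U_d$ is already known to be open, but that is precisely what is in question; moreover, fiberwise Bertini over a closed point $\mathfrak m$ only controls $p_2(B_{k(\mathfrak m)})$, not the fiber of $\overline{p_2(B)}$ over $\mathfrak m$, so even after taking closures your checking-at-closed-points argument does not close the loop. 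Note also that you never use the universally catenary hypothesis, which is a warning sign.

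The paper handles exactly this issue, and in doing so also pins down the uniform $d_0$ you flagged as the main obstacle. It first runs a Noetherian induction on $Y$ (shrinking and passing to a finite \'etale cover) to reduce to the case where $f$ is flat with geometrically irreducible fibers. In that situation the bad locus becomes completely explicit: it is the total space of the vector bundle $V=f_*I_X(d)$ over $Y$ (degree-$d$ forms vanishing on the fiber $X_y$), and Serre vanishing furnishes the $d_0$ guaranteeing this pushforward is locally free and base-change compatible. A Hilbert-polynomial count then gives $\dim V = \dim Y + p_{\mathbb{P}^n}(d) - p_{X_y}(d) < N$ once $p_{X_y}(d) > \dim Y$, and the catenary + Jacobson hypothesis (via \cite[Tag 0DS6]{stacks-project}) ensures the scheme-theoretic image of $V$ in $\mathbb{A}^N_R$ still has dimension $<N$, hence cannot contain any fiber of $\mathbb{A}^N_R\to\Spec R$. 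The dimension bound is what replaces the properness you were missing.
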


\begin{proof}

We begin by reducing to the case where $f$ is flat with geometrically irreducible fibers. In the course of the proof, we will fix the base scheme $\Spec R$ but vary the morphism $f$.  Let $P_f$ be the statement of the theorem for the morphism $f: X \to Y$. Moreover, for any finite-type morphism $Y' \to Y$ let $P_{f': X' \to Y'}$ (or simply, $P_{f'}$) be the statement that the theorem holds for $f'$ where $X'=X \times_Y Y'$. It follows that $P_f$ implies $P_{f'}$ and if $Y' \to Y$ is surjective then $P_{f'}$ implies $P_{f}$.

Observe that by Noetherian induction on $Y$ it suffices to show $P_{f'}$ is true for a nonempty open subscheme $Y' \subset Y$. Indeed, by Noetherian induction $P_{g}$ holds where $g$ is the base change of $f$ along $i: (Y \backslash Y')_{red} \subset Y$. Thus, by taking the larger of the $d_0$'s and intersecting the $U_d$'s (here the $d_0$ and $U_d$ are as in the statement of $P_{g}$ and $P_{f'}$), we may conclude that $P_f$ holds. Note that we may replace $Y$ with its reduction as $P_{f}$ is a topological statement. By shrinking $Y$ we may also assume it is integral, regular and affine. 

By shrinking $Y$ and replacing it with a connected finite \'etale cover if necessary (see \cite[Tag 020J]{stacks-project}) we may assume that the irreducible components, $\{F_i\}_{1 \leq i \leq n}$, of the generic fiber of $X \to Y$ are geometrically irreducible and that $\{\overline{F_i}\}_{1 \leq i \leq n}$ are the irreducible components of $X$. Denote by $f_i: \overline{F_i} \to Y$ the induced maps where each $\overline{F_i}$ inherits the reduced scheme structure. Shrinking $Y$ further, we may suppose that each $f_i$ is flat (see \cite[Tag 052A]{stacks-project}).

To show $P_{f}$ it suffices to show $P_{f_i}$ holds for the maps $f_i: F_i \to Y$ with constant fiber dimension $r_i>0$. Finally, since $f_i$ is flat with geometrically irreducible generic fiber we may shrink $Y$ further (see \cite[Tag 0559]{stacks-project}) so that the fibers of each $f_i$ are irreducible. Thus, we have reduced the proposition to the case where $X \to Y$ is a proper, flat morphism with irreducible fibers of constant positive dimension. 

Consider the exact sequence of sheaves that defines $X$ as a closed subscheme of $\mathbb{P}^n_Y$:
\[0 \to I_X \to \mathcal{O}_{\mathbb{P}^n_Y} \to \mathcal{O}_X \to 0\]

\noindent By \cite[2.2.1]{EGA3.2} there is a $l_0$ with the property that for every $d \geq l_0$ and $i>0$:
\[R^if_*(I_X(d))=R^if_*(\mathcal{O}_{\mathbb{P}^n_Y}(d))=R^if_*(\mathcal{O}_X(d))=0\]

\noindent and therefore
\[0 \to f_*I_X (d) \to f_* \mathcal{O}_{\mathbb{P}^n_Y} (d) \to f_*\mathcal{O}_X (d) \to 0\]

\noindent is an exact sequence of vector bundles whose formation is compatible with arbitrary base change on $Y$. Observe that the middle sheaf is a trivial vector bundle. We can write the total spaces over $Y$ of the first two vector bundles above as
\[i: V \to H^0(\mathbb{P}^n_{R}, \mathcal{O}(d)) \times_{\Spec R} Y \]
This inclusion can be described at any fiber $y \in Y$ as the subspace of those degree $d$ forms (with coefficients in the residue field of $y$) which vanish on $X_{y} \subset \mathbb{P}^n_{y}$. Thus, it suffices to show that the image of the composition
\[p_1 \circ i: V \to H^0(\mathbb{P}^n_{R}, \mathcal{O}(d)) \times_{\Spec R} Y \to H^0(\mathbb{P}^n_{R}, \mathcal{O}(d))\]

\noindent is disjoint from a dense open subset $U \subset H^0(\mathbb{P}^n_{R}, \mathcal{O}(d))=\mathbb{A}^N_R$ which surjects onto $\Spec R$. Since the fibers of $f$ have positive dimension we know that $p_{X_{y}}(d)$ eventually becomes larger than the dimension of $Y$. When this is true, we have
\[\text{dim} V = \text{dim}Y+p_{\mathbb{P}_y^n}(d)-p_{X_{y}}(d) < \text{dim} Y + p_{\mathbb{P}_y^n}(d)-\text{dim} Y=h^0(\mathbb{P}_y^n, \mathcal{O}(d))=N\]
This shows that $p_1 \circ i$ must have an image contained in a proper closed subset of $H^0(\mathbb{P}^n_{R}, \mathcal{O}(d))=\mathbb{A}^N_{R}$. Moreover, by \cite[Tag 0DS6]{stacks-project} applied to $p_1 \circ i$, the scheme-theoretic image of $V$ has dimension $< N$. As such, the complement of this image must surject onto $\Spec R$. Thus, $V$ is an open subset of $H^0(\mathbb{P}^n_{R}, \mathcal{O}(d))$ as in the proposition. \end{proof}

\begin{lemma} \label{avoidance} Fix a Noetherian, integral, affine scheme $\Spec R$ and suppose $C \subset \mathbb{P}^n_R$ is a proper closed subscheme which is nowhere dense in every fiber of $\mathbb{P}^n_R \to \Spec R$. Let $S$ denote the (finite) set of generic points of $C$. Then for any $d>0$ there is an open subset $V \subset H^0(\mathbb{P}^n_R, \mathcal{O}(d))=\mathbb{A}^N_R$ surjective over $\Spec R$ with the following property: for any finite flat cover $f: \Spec R' \to \Spec R$ and a $\Spec R'$-point of $V$, the induced hypersurface $H \subset \mathbb{P}^n_{R'}$ misses $S_{R'}=g^{-1}(S)$ where $g:\mathbb{P}^n_{R'} \to \mathbb{P}^n_{R}$.
\end{lemma}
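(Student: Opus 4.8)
The plan is to treat the finitely many points of $S$ one at a time. Since a finite intersection of opens of $\mathbb{A}^N_R$, each surjective over $\Spec R$, is again such (each fibre being the complement of finitely many proper closed subsets of the irreducible affine space), it is enough to produce, for each fixed $s\in S$ with image $t\in\Spec R$, an open $V_s\subseteq\mathbb{A}^N_R$ surjective over $\Spec R$ which works for $s$ alone, and then set $V:=\bigcap_{s\in S}V_s$. We may also replace $C$ by $C_{\mathrm{red}}$, so that $C=\bigcup_{s\in S}\overline{\{s\}}$ and each $\overline{\{s\}}$ is integral.

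Fix $s$ and $t$. The first step is to translate the condition. Choose a standard chart $x_{i_0}\neq 0$ of $\mathbb{P}^n$ containing $s$ and trivialise $\mathcal{O}(d)$ there by $x_{i_0}^d$; evaluation at $s$ then sends a degree-$d$ form with coefficient vector $c=(c_\mu)$, indexed by the monomials $\mu$ of degree $d$, to $\mathcal{E}(c):=\sum_\mu a_\mu c_\mu\in\kappa(s)$, where $a_\mu\in\kappa(s)$ is the value at $s$ of the dehomogenised $\mu$. Since $a_{x_{i_0}^d}=1$, $\mathcal{E}$ is a nonzero linear form with coefficients in $\kappa(s)$. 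For a finite flat cover $\Spec R'\to\Spec R$ one has $g^{-1}(s)=\Spec(\kappa(s)\otimes_R R')$, and for $\xi\in\mathbb{A}^N_R(R')$ the defining form of $H_\xi$ restricts on $g^{-1}(s)$ to the image $\mathcal{E}(\xi)\in\kappa(s)\otimes_R R'$ of $\mathcal{E}$ under $\xi$; thus $H_\xi$ misses $g^{-1}(s)$ if and only if $\mathcal{E}(\xi)$ is a unit in $\kappa(s)\otimes_R R'$. Decomposing $\kappa(s)\otimes_R R'$ into its (Artinian) local factors, and using that $R'$ is \emph{finite} over $R$ --- so that the residue fields of $\kappa(t)\otimes_R R'$ are finite over $\kappa(t)$, whence the images in $\mathbb{A}^N_{\kappa(t)}$ of the points of $g^{-1}(s)$ under any $R'$-point factoring through $V_s$ are closed points of $V_s\cap\mathbb{A}^N_{\kappa(t)}$ --- one sees that $V_s$ works for $s$ as soon as: for every closed point $v$ of $V_s\cap\mathbb{A}^N_{\kappa(t)}$, the specialisation $\mathcal{E}(v)$ is a unit in $\kappa(v)\otimes_{\kappa(t)}\kappa(s)$.

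The heart is to bound the bad closed points. Write $\kappa(s)$ as a finite extension of a purely transcendental subextension $\kappa(t)(z)\subseteq\kappa(s)$, and put $e:=[\kappa(s):\kappa(t)(z)]$. For a closed point $v$, the ring $\kappa(v)\otimes_{\kappa(t)}\kappa(s)$ is free of rank $e$ over $\kappa(v)\otimes_{\kappa(t)}\kappa(t)(z)\cong\kappa(v)(z)$ (up to a harmless nilpotent issue if $\kappa(v)/\kappa(t)$ is inseparable), so $\mathcal{E}(v)$ is a unit there if and only if its norm to $\kappa(v)(z)$ is nonzero; by base-change compatibility of norms this norm is $\bigl(\mathrm{Norm}_{\kappa(s)/\kappa(t)(z)}\mathcal{E}\bigr)$ evaluated at $v$. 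After clearing denominators, $\mathrm{Norm}_{\kappa(s)/\kappa(t)(z)}\mathcal{E}=P(z,c)/Q(z)$ with $P\in\kappa(t)[z,c]$ nonzero (being a norm of the nonzero linear form $\mathcal{E}$) and $Q\in\kappa(t)[z]\setminus\{0\}$. Writing $P=\sum_\beta P_\beta(c)\,z^\beta$ with $P_\beta\in\kappa(t)[c]$, the norm evaluated at $v$ vanishes precisely when $v$ lies in the closed subscheme $B_s:=\bigcap_\beta V(P_\beta)\subseteq\mathbb{A}^N_{\kappa(t)}$, which is \emph{proper} (some $P_\beta\neq 0$), defined over $\kappa(t)$, and independent of $v$. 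Hence it suffices to arrange $V_s\cap\mathbb{A}^N_{\kappa(t)}\cap B_s=\emptyset$.

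It remains to globalise. Since $B_s$ is closed in the locally closed subscheme $\mathbb{A}^N_{\kappa(t)}\subseteq\mathbb{A}^N_R$, its closure $\overline{B_s}$ in $\mathbb{A}^N_R$ meets $\mathbb{A}^N_{\kappa(t)}$ in exactly $B_s$, so $V_s:=\mathbb{A}^N_R\setminus\overline{B_s}$ satisfies $V_s\cap\mathbb{A}^N_{\kappa(t)}\cap B_s=\emptyset$, and since $\dim B_s\leq N-1$ a dimension count shows $\overline{B_s}$ omits the generic point of $\mathbb{A}^N_R$. The only remaining point --- that $\overline{B_s}$ contains no fibre $\mathbb{A}^N_{\kappa(u)}$ over a non-generic $u\in\Spec R$, which for this naive $\overline{B_s}$ can fail once $\dim R\geq 2$ --- I would handle exactly as in the proof of Proposition \ref{FiberBertini}: by Noetherian induction on $\Spec R$, spreading $B_s$ out over a dense open of $\overline{\{t\}}$ to a proper family and absorbing the behaviour over the complementary closed set (with its reduced structure, treated component by component) into the induction, taking the larger $d_0$ and the intersection of the resulting opens. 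I expect the two genuinely delicate points to be the norm computation of the third step --- in particular the inseparability bookkeeping in passing between $\kappa(v)\otimes_{\kappa(t)}\kappa(t)(z)$ and its reduction --- and this surjectivity of $V$ over the non-generic fibres of $\Spec R$, the latter being precisely the stratification of the base that the Noetherian-induction formalism of Proposition \ref{FiberBertini} is designed to carry out.
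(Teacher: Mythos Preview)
Your route differs substantially from the paper's and leaves a real gap at the end. The paper's key move is to work not at the generic point $s$ but at a \emph{closed} point $p$ of the component $\overline{\{s\}}$. Since $\mathbb{P}^n_R\to\Spec R$ is proper, $p$ lies over a closed point $q\in\Spec R$; thus the bad locus $K\subset\mathbb{A}^N_{\kappa(q)}$ sits inside a single \emph{closed} fibre of $\mathbb{A}^N_R\to\Spec R$, and its complement $V$ is trivially surjective over $\Spec R$ (it is all of $\mathbb{A}^N$ over every $u\neq q$ and a dense open over $q$). The finiteness of $R'/R$ enters only through specialisation: if $H_\xi$ contains some $s'\in g^{-1}(s)$ then $H_\xi\supset\overline{\{s'\}}$, which surjects onto $\overline{\{s\}}$ under the finite map $g$ and hence hits a point $p'$ over $p$; so $H_\xi$ passes through $p'$, forcing $\xi(q')$ into the bad locus. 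No spreading out, no Noetherian induction, and no $d_0$ --- your remark about ``taking the larger $d_0$'' is a confusion with Proposition~\ref{FiberBertini}; here $d>0$ is fixed throughout, so that part of your sketch does not type-check.

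That said, your norm computation is not wasted; it is exactly what makes the last implication rigorous when $[\kappa(p):\kappa(q)]>1$. The literal kernel of $\kappa(q)^N\to\kappa(p)$ is too small to serve as $K$: one needs the locus $\{v:\mathcal{E}(v)\text{ is a non-unit in }\kappa(v)\otimes_{\kappa(q)}\kappa(p)\}$, i.e.\ the zero locus of $\mathrm{Nm}_{\kappa(p)/\kappa(q)}(\mathcal{E})$ --- precisely your $B_s$, but computed at the closed point $p$, where $\kappa(p)/\kappa(q)$ is finite, there is no transcendence basis $z$, and the norm is an honest polynomial in $c$ over $\kappa(q)$. (For instance, with $R=\mathbb{Q}$, $d=1$, $p\in\mathbb{P}^1_{\mathbb{Q}}$ of residue field $\mathbb{Q}(\sqrt2)$, and $R'=\mathbb{Q}(\sqrt2)$, the section $\xi=(\sqrt2,-1)$ avoids the kernel $\{0\}$ yet $H_\xi$ passes through one of the two $R'$-points over $p$.) So the clean argument is: keep your norm form, but evaluate it at a closed point of $C$; the resulting $K$ lies in one closed fibre and your surjectivity difficulty, together with the Noetherian-induction scaffolding, disappears.
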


\begin{proof} By intersecting an open set for each irreducible component of $C$ we may assume that $C$ is irreducible. Let $p: \Spec k(p) \to C$ be a closed point of $C$ and let $k(q)$ be the residue field of the image of $p$ in $\Spec R$. Then the kernel, $K$, of the natural morphism $H^0(\mathbb{P}^n_{k(q)}, \mathcal{O}(d)) \to k(p)$ is a proper closed subspace and corresponds to the collection of degree $d$ hypersurfaces passing through $p$. Thus 
\[K \subset H^0(\mathbb{P}^n_{k(q)}, \mathcal{O}(d))=\mathbb{A}^N_{k(q)} \subset H^0(\mathbb{P}^n_{R}, \mathcal{O}(d))=\mathbb{A}^N_R\]
\noindent is a closed subset not dense in any of the fibers of $\mathbb{A}^N_R \to \Spec R$. Let $V$ denote its complement. If $H$ is a hypersurface in $\mathbb{P}^n_{R'}$ induced by a $\Spec R'$-point of $V$ then we claim that $H$ cannot contain any generic point of $C_{R'}$. Indeed, suppose $H$ contains a generic point of $C_{R'}$, then it must contain the closure of that point. But since $\Spec R' \to \Spec R$ is finite this generic point must specialize to a point in $C_{R'}$ lying over $p$. In particular, $H$ must pass through this point. However, this violates the fact that $V$ misses $K$.  \end{proof}

\begin{proposition}
Let $R$ and $f:X \to Y$ be as in the statement of \ref{FiberBertini} where we also assume that $R$ is the ring of integers in a number field $K$ and the embedding $X \to \mathbb{P}^n_Y$ is induced by an embedding $X \to \mathbb{P}^n_R$. Suppose $C \subset X$ is a closed subscheme which is nowhere dense in any fiber over $\Spec R$. Then there is a finite flat morphism $\Spec B \to \Spec R$ (where $B$ is the ring of integers in a number field $L$) and a section $s \in H^0(\mathbb{P}^n_B, \mathcal{O}(d))$ whose vanishing, $H$, doesn't contain any component of $C_B$ and where $H \subset \mathbb{P}^n_{B}$ intersects $X_B \subset \mathbb{P}^n_{B}$ in such a way that
\[X_B \cap H \to Y_B\]
has constant fiber dimension $r-1$.
\end{proposition}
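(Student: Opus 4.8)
The plan is to intersect the two open parameter spaces of ``good'' degree $d$ forms produced by Proposition~\ref{FiberBertini} and Lemma~\ref{avoidance}, and then to locate inside the resulting open a section defined over a ring of integers which is finite flat over $R$. Concretely: fix $d = d_0$ (the integer from Proposition~\ref{FiberBertini}) and let $U \subseteq H^0(\mathbb{P}^n_R, \mathcal{O}(d)) = \mathbb{A}^N_R$ be the open subset furnished there, which surjects onto $\Spec R$ and whose geometric points parametrize hypersurfaces cutting $X/Y$ down to relative dimension $r-1$. Since $f$ is proper, $X \hookrightarrow \mathbb{P}^n_Y$ is a closed immersion, so $C \subseteq X$ is closed in $\mathbb{P}^n_R$; and $C$ is nowhere dense in every fibre of $\mathbb{P}^n_R \to \Spec R$ because it is nowhere dense in the corresponding (closed) fibre of $X$. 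Hence Lemma~\ref{avoidance} applies to $C$ and yields an open $V \subseteq \mathbb{A}^N_R$, surjective over $\Spec R$, so that for every finite flat cover $\Spec R' \to \Spec R$ and every $\Spec R'$-point of $V$ the induced hypersurface in $\mathbb{P}^n_{R'}$ misses the preimage of the set $S$ of generic points of $C$. Put $W := U \cap V$; since $\Spec R$ is one-dimensional and $\mathbb{A}^N_{\kappa(s)}$ is irreducible, the fibre $U_s \cap V_s$ is nonempty for every $s$, so $W$ is an open subscheme of $\mathbb{A}^N_R$ still surjective over $\Spec R$.

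Next I would reduce the statement to a section statement. Suppose we are given a finite flat morphism $\pi \colon \Spec B \to \Spec R$ with $B = \mathcal{O}_L$ the ring of integers of a number field $L$, together with a section $\tau \colon \Spec B \to W_B := W \times_R B$. Such a $\tau$ is exactly an element $s \in H^0(\mathbb{P}^n_B, \mathcal{O}(d))$, and I claim its vanishing $H = V(s)$ has the two required properties. Since $\tau$ factors through $V_B$, Lemma~\ref{avoidance} applied with $R' = B$ shows $H$ misses the preimage of $S$; and because $C_B \to C$ is finite flat it carries generic points to generic points, so that preimage contains every generic point of $C_B$, whence $H$ contains no component of $C_B$. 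Since $\tau$ factors through $U_B$, every geometric point $\Spec \bar{k} \to \Spec B$ induces via $\tau$ a geometric point of $U$, so $X_{\bar{k}} \cap (H_{\bar{k}} \times_{\bar{k}} Y_{\bar{k}}) \to Y_{\bar{k}}$ has constant fibre dimension $r-1$; as fibre dimension is insensitive to base field extension and every point of $Y_B$ has a geometric point above it lying over a geometric point of $\Spec B$, it follows that $X_B \cap H \to Y_B$ has constant fibre dimension $r-1$, as desired.

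The heart of the matter, and the step I expect to be the main obstacle, is the construction of $\tau$. Here I would exploit that $W \to \Spec R$ is smooth (being open in $\mathbb{A}^N_R$) and surjective. A point of the generic fibre $W_K$ with coordinates in $\mathbb{Z}$ exists (a nonzero polynomial cannot vanish on $\mathbb{Z}^N$) and gives a section $\sigma_0 \colon \Spec R \to \mathbb{A}^N_R$ landing in $W$ over all but finitely many closed points of $\Spec R$. Over a prime $\mathfrak{q}$ whose residue field is large the fibre of $W$ automatically contains a rational point, since the complement of $W$ in $\mathbb{A}^N$ is cut out by equations of bounded degree and so carries at most $O(|\kappa(\mathfrak{q})|^{N-1})$ points. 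I would therefore choose a finite extension $L/K$ in which each of the finitely many ``small'' primes of $R$ has only primes of large residue degree above it --- for instance an extension inert of suitably large degree at each, which exists by elementary algebraic number theory --- so that every fibre of $W_B \to \Spec B$ acquires a rational point, while $\pi$ is automatically finite flat ($B$ being finite and torsion-free over the Dedekind domain $R$). Finally, starting from an $L$-point of $W_L$ --- whose Zariski closure in $W_B$ is a section over a dense open of $\Spec B$ --- and lifting, at each of the remaining finitely many primes of $B$, a rational point of the fibre to the completed local ring by smoothness and Hensel's lemma, a weak-approximation argument inside $\mathbb{A}^N_L$ (membership in the open $W$ being an open condition at each place) glues these local sections into the desired global section $\tau \colon \Spec B \to W_B$. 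Alternatively, one may invoke the general principle that a smooth surjective morphism to the spectrum of a ring of integers admits a section after a finite flat base change.
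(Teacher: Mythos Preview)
Your overall architecture matches the paper exactly: intersect the open loci $U_{d_0}$ and $V_{d_0}$ from Proposition~\ref{FiberBertini} and Lemma~\ref{avoidance}, check that $W=U\cap V$ still surjects onto $\Spec R$, and then reduce the proposition to producing a $\Spec B$-point of $W$ for some ring of integers $B$. Your verification that such a point yields the desired hypersurface is fine.

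The gap is in your construction of the $\Spec B$-point. The weak-approximation step does not work as written: modifying a candidate $\sigma_0\in B^N$ to land in $W$ at the finitely many bad primes will in general knock it out of $W$ at other primes, and no finite approximation can control all primes at once. Concretely, take $W=\mathbb{A}^1_{\mathbb Z}\setminus V(x(x-6))$. This is smooth, surjective over $\Spec\mathbb Z$, every fibre $W_{\mathbb F_p}$ has a rational point, and $W(\mathbb Z_p)\neq\emptyset$ for all $p$; yet $W(\mathbb Z)=\emptyset$ since $n(n-6)=\pm 1$ has no integer solution. So ``local sections everywhere plus weak approximation'' cannot produce a global integral section, and enlarging residue fields alone does not repair this---one must genuinely allow a further extension \emph{after} seeing which primes go bad, and there is no evident reason this process terminates.

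The paper sidesteps this entirely by invoking Moret-Bailly's Skolem-type theorem \cite[1.6]{MBSkolem}: since $W$ is of finite type and surjective over $\Spec R$, it has a $\Spec\bar R$-point (where $\bar R$ is the integral closure of $R$ in $\bar K$), and then a limit argument using finite presentation descends this to a $\Spec B$-point for some ring of integers $B$. This is precisely the ``general principle'' you gesture at in your last sentence; it is the correct replacement for the weak-approximation paragraph, and once you cite it your proof is complete and coincides with the paper's.
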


\begin{proof}
Consider $U_{d_0}$ as in the statement of Proposition \ref{FiberBertini} for the immersion $X \subset \mathbb{P}^n_Y$ induced by $i$. Next, choose an open subset $V_{d_0} \subset H^0(\mathbb{P}^n_{\Spec R}, \mathcal{O}(d_0))$ as in Lemma \ref{avoidance}. It suffices to show that the composition 
\[U_d \cap V_d \subset H^0(\mathbb{P}^n_{\Spec R}, \mathcal{O}(d))=\mathbb{A}^N_{\Spec R} \to \Spec R\]

\noindent admits a $\Spec B$-point where $B$ is a ring of integers in a number field $L$. However, as $R$ is the ring of integers in a number field $K$ we may apply \cite[1.6]{MBSkolem}. In this way we obtain a $\Spec \overline{R}$-point of $U_d \cap V_d$ where $\overline{R}$ denotes the integral closure of $R$ in $\overline{K}$. However, since $U_d \cap V_d$ is locally of finite presentation and $\overline{R}$ is the colimit of rings of integers, we can find a $\Spec B$-point as desired. 
\end{proof}

Recall that a stack $\sX$ is said to be a \emph{global quotient stack} if we can write $\sX=[Z/\text{GL}_n]$ where $Z$ is an algebraic space equipped with an action of $\text{GL}_n$.

\begin{thm}\label{finite flat}
Let $\mathcal{X}$ be an algebraic stack with finite diagonal which is finite-type over $\Spec \mathbb{Z}$ and denote by $X$ the corresponding coarse moduli space. Suppose that $\mathcal{X}$ is a global quotient stack and that $X$ admits an ample line bundle. Then $\mathcal{X}$ admits a finite flat cover $Z \to \mathcal{X}$ where $Z$ is a scheme with an ample line bundle. 
\end{thm}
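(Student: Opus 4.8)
The plan is to reduce to the situation in Proposition~\ref{FiberBertini} and its number-field refinement, and then run the slicing induction of Kresch--Vistoli. Write $\sX = [Z/\text{GL}_n]$; the diagonal of $\sX$ is finite, so $Z \to \sX$ is finite, and in particular $Z$ is an algebraic space of finite type over $\Spec \Z$. Since the coarse moduli map $\pi\colon \sX \to X$ is proper, quasi-finite and $X$ carries an ample line bundle $L$, the composite $Z \to \sX \to X$ is proper; moreover $\sX$ is a global quotient stack with affine (in fact finite) stabilizers, so by Totaro--Gross it has the resolution property, and by \cite{GrossRes} $\sX$ itself has a quasi-projective-over-$X$ structure. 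More concretely, I would first produce a line bundle $M$ on $\sX$ which is relatively ample for $\pi$ — this is where global-quotient-ness is used, via \cite{KreschVistoli}: a faithful representation of $\text{GL}_n$ on a vector space $W$ gives $[W/\text{GL}_n] = BGL_n$-pullback data realising $\sX$ inside a projective bundle over $X$, hence $\sX \hookrightarrow \mathbb{P}(\mathcal{E})$ for some coherent sheaf $\mathcal{E}$ on $X$, and tensoring with a high power of the ample $L$ lets one assume $\sX \hookrightarrow \mathbb{P}^n_X \hookrightarrow \mathbb{P}^n_{\Z}$ is a locally closed immersion which is in fact closed by properness.

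Next I would arrange that the generic locus is schematic. Because $\sX$ has finite diagonal and is generically reduced of finite type over $\Z$, there is a dense open $U \subset X$ over which $\pi$ is an isomorphism (the generic points of $\sX$ are schematic since the stabilizers there are finite and, on a reduced stack of finite type over $\Z$, generically trivial after a stratification — here one may need to pass to a dense open). Let $C \subset \sX$ be the closed substack where $\pi$ fails to be an isomorphism, i.e.\ the locus of nontrivial stabilizer; it is nowhere dense in $\sX$, and I want to cut it down fiber-by-fiber over $\Spec \Z$ while simultaneously cutting down the fiber dimension of $\pi$. This is exactly the combination packaged by Proposition~\ref{FiberBertini} (applied to $f = \pi$, with constant fiber dimension arranged by a further stratification of $X$) and Lemma~\ref{avoidance} (applied to $C$): there is $d_0$ so that for $d \geq d_0$ a general degree-$d$ hypersurface $H \subset \mathbb{P}^n$ meets every fiber of $\pi$ in the expected dimension and misses every generic point of $C$. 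The number-field statement (the Proposition right after Lemma~\ref{avoidance}, using \cite{MBSkolem}) guarantees such an $H$ can be found over a finite flat extension $\Spec B \to \Spec R$ with $B$ again a ring of integers.

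Now I would iterate. Replace $\sX$ by $\sX_B \cap H$ and $X$ by $X_B$ (which still carries an ample line bundle, being finite over $X$ and a fortiori projective over $\Spec B$), noting $\sX_B \cap H$ is again a global quotient stack with finite diagonal, finite type over $\Spec \Z$, whose coarse space is $X_B \cap H$, and which still misses the bad locus generically — more precisely $C$ shrinks to $C_B \cap H$, which again is nowhere dense in each fiber over $\Spec \Z$. After $r = \dim(\sX/X)$ such steps the map $\sX' \to X'$ has relative dimension $0$; being proper and quasi-finite it is finite, and having shrunk $C$ away from the generic points at each stage it is an isomorphism over a dense open, hence (being finite and flat over a regular-in-codimension-one base after one more normalization, or directly by miracle flatness once $\sX'$ is shown Cohen--Macaulay) flat — if flatness is not automatic, replace $X'$ by a suitable affine open cover or invoke generic flatness plus the fact that $X'$ is $1$-dimensional over the number-ring part. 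The resulting $Z := \sX' \times_{X'} (\text{atlas})$, or rather $\sX'$ itself once it is shown to be a scheme (a finite stack over a scheme with an ample line bundle and trivial generic stabilizer is a scheme after the slicing, by \cite{KreschVistoli}), is finite and flat over $\sX$, is a scheme, and inherits an ample line bundle from $X'$ via the finite map $Z \to X'$.

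I expect the main obstacle to be \emph{flatness} of the final finite cover $Z \to \sX$: the Bertini steps control dimensions but not flatness, so one must either (i) show $\sX$ is Cohen--Macaulay and apply miracle flatness over the Cohen--Macaulay base $X'$ after each slice — which forces a careful treatment of how $\sX$ degenerates over the finitely many bad primes of $\Z$ — or (ii) accept non-flatness generically-over-$\Spec\Z$ and correct it by a further finite flat base change $\Spec B' \to \Spec B$ absorbing the bad primes, using again \cite{MBSkolem}. A secondary technical point is ensuring that ``global quotient stack'' is preserved under taking hypersurface sections and that the coarse-space ample line bundle survives; both follow formally, the first because $H \times_X \sX = [(\,\text{preimage of }H\,)/\text{GL}_n]$, the second because finite morphisms pull back ample line bundles to ample line bundles on schemes. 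The ``generic stabilizer is trivial'' hypothesis needed to make the sliced stack a scheme is the place where one must be slightly careful: if $\sX$ has nontrivial generic stabilizer one first replaces it by a gerbe-rigidification, or simply notes that the statement only requires $Z \to \sX$ finite flat with $Z$ a scheme, so it suffices to slice down a scheme atlas of $\sX$ equivariantly, which is what the Kresch--Vistoli argument does.
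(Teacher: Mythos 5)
There is a genuine gap, and it is structural. Your plan is to embed $\sX$ itself into a projective bundle over its coarse space $X$ and then slice $\sX$ by hypersurfaces. This cannot work: an algebraic stack with nontrivial stabilizers admits no immersion into a scheme or algebraic space, so there is no embedding $\sX \hookrightarrow \mathbb{P}(\mathcal{E})$ or $\sX\hookrightarrow \mathbb{P}^n_X$ to slice. Relatedly, the coarse map $\pi\colon \sX\to X$ is quasi-finite, so Proposition~\ref{FiberBertini} (which requires constant fiber dimension $r>0$) does not even apply to $\pi$, and there is no positive relative dimension to cut down. Your argument also leans on the non-schematic locus of $\sX$ being nowhere dense (``a dense open $U\subset X$ over which $\pi$ is an isomorphism''), which is not implied by the hypotheses: $\sX$ could be a gerbe, e.g.\ $BG$ for a finite group scheme $G$, and the theorem must still produce a finite flat schematic cover. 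Two smaller slips point to the same confusion: the $\text{GL}_n$-atlas $Z\to\sX=[Z/\text{GL}_n]$ is a $\text{GL}_n$-torsor, hence smooth affine of positive relative dimension, not finite (finiteness of the diagonal controls stabilizers, not the atlas); and the flatness problem you flag at the end (miracle flatness, Cohen--Macaulayness of $\sX$, or absorbing bad primes by base change on $\Spec\Z$) is a real obstruction in your setup with no clear resolution.

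The paper avoids all of this by slicing not $\sX$ but a projective bundle \emph{over} $\sX$: take a faithful vector bundle $V$ (this is what the global-quotient hypothesis provides), set $\mathcal{P}=\mathbb{P}(V\oplus\mathcal{O})\to\sX$, a smooth projective morphism of constant fiber dimension $r>0$ whose total space contains a dense open schematic locus $U$ surjecting onto $\sX$; taking fiber products of $\mathcal{P}$ over $\sX$ makes the stacky locus of dimension $<r$. One then passes to coarse spaces $P\to X$, shows $P$ is projective (a power of the relative $\mathcal{O}(1)$ descends, and $X$ has an ample line bundle), and applies Proposition~\ref{FiberBertini} together with Lemma~\ref{avoidance} and \cite{MBSkolem} to this map of positive fiber dimension, iterating $r$ times and allowing a finite flat extension of the base ring of integers at each step. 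The last slice misses the preimage of the stacky locus for dimension reasons, hence factors through $\sX$, and flatness of the resulting $Z\to\sX$ is automatic from the local criterion of flatness because each hypersurface section drops every fiber dimension by exactly one (the fibers are local complete intersections in projective space), so no Cohen--Macaulay hypothesis on $\sX$ and no corrective base change are needed. Your closing remark about slicing a scheme atlas equivariantly gestures at this, but the body of your argument slices the wrong object, and the gerbe case shows the reduction you rely on is unavailable.
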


\begin{proof} We follow the slicing strategy of \cite{KreschVistoli} with one small difference: we must allow ourselves to iteratively pass to finite flat covers of $\Spec \mathbb{Z}$. Indeed, let $V$ be a faithful vector bundle and consider $\mathcal{P}=\mathbb{P}(V \oplus \mathcal{O}) \to \mathcal{X}$. This is a smooth projective morphism with constant fiber dimension $r>0$ and a dense open subscheme $U \subset \mathcal{P}$ surjecting onto $\mathcal{X}$. Since the morphism is representable, by taking products of $\mathcal{P}$ with itself over $\mathcal{X}$ we may suppose that the dimension of the stacky locus of $\mathcal{P}$ is strictly smaller than that of the fiber dimension $r>0$. 

Passing to coarse moduli spaces we obtain a proper morphism $\pi: P \to X$ with constant fiber dimension $r>0$. We also have an open subset $U \subset P$ corresponding to the representable subset of $\mathcal{P}$, set $C=P \backslash U$ and note that by construction $\dim C<r$. 

Observe that $P \to X$ is a projective morphism: $\mathcal{P} \to \mathcal{X}$ has a relatively ample line bundle $L$ and for some $n$ we have that $L^{\otimes n}$ descends to $P$ (see \cite{RydhMO}) and that it is ample relative to $P \to X$. Indeed by the properness of $P \to X$, one can check ampleness after a finite cover on $X$, and we do this after a finite cover on $\mathcal{X}$ where it holds because $L^{\otimes n}$ is relatively ample for $\mathcal{P} \to \mathcal{X}$. Thus, because $X$ admits an ample line bundle, it follows that $P$ does as well and so we may fix an embedding $P \subset \mathbb{P}^n_{\mathbb{Z}}$. Now we apply the previous proposition to obtain a finite flat morphism $\Spec B_1 \to \Spec \mathbb{Z}$ and a hypersurface $H$ in $\mathbb{P}^n_{B_1}$ with the property that 
\[\pi_{B_1}: H \cap P_{B_1} \to X_{B_1}\]
has constant fiber dimension $r-1$ and $H$ doesn't contain any component of $C_{B_1}$. Observe that when restricted to $U_{B_1} \subset P_{B_1}$, $\pi_{B_1}$ factors through $\pi_{B_1}': H \cap U_{B_1} \to \mathcal{X}_{B_1}$ and $\pi_{B_1}'$ is flat with Cohen--Macaulay fibers. Indeed, flatness follows from the local criterion of flatness and the Cohen--Macaulay property follows because it is locally a complete intersection. Repeating this process $r-1$ more times, we obtain a finite flat cover $Z \to \mathcal{X}_{B_r}$. Indeed, for dimension reasons the $r$th slice must miss (the preimage of) $C$ entirely and so $\pi_{B_r}$ automatically factors through $\mathcal{X}_{B_r}$. By composing we obtain the finite flat cover $Z \to \mathcal{X}_{B_r} \to \mathcal{X}$ as desired. \end{proof}

\begin{corollary} \label{finiteflatgeneral}

Let $\mathcal{X}$ be a quasi-compact algebraic stack with finite diagonal and denote by $X$ the corresponding coarse moduli space (see \cite{RydhQuotient}). Suppose that $\mathcal{X}$ is a global quotient stack and that $X$ admits an ample line bundle. Then $\mathcal{X}$ admits a finite flat cover $Z \to \mathcal{X}$ where $Z$ is a scheme with an ample line bundle. 

\end{corollary}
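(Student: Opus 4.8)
The plan is to deduce Corollary \ref{finiteflatgeneral} from Theorem \ref{finite flat} by an approximation argument that reduces to the case where the base is the ring of integers in a number field (or, more precisely, a ring finite-type over $\Z$ that is universally catenary and Jacobson). Since $\mathcal{X}$ is quasi-compact with finite diagonal and is a global quotient stack $[Z/\GL_n]$, it is of finite presentation over $\Z$ after a limit argument: write $\mathcal{X} = \lim_\lambda \mathcal{X}_\lambda$ with each $\mathcal{X}_\lambda$ of finite type over $\Z$, finite diagonal, and a global quotient stack, and with affine transition maps. The coarse space commutes with this limit (or at least, one can spread out the ample line bundle on $X$ to an ample line bundle on $X_\lambda$ for $\lambda$ large using \cite[Tag 01ZM]{stacks-project}-type arguments on limits of schemes with ample bundles). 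Thus for $\lambda$ large enough, $\mathcal{X}_\lambda$ satisfies the hypotheses of Theorem \ref{finite flat}: it has finite diagonal, is finite-type over $\Z$, is a global quotient, and its coarse space admits an ample line bundle.

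Once we are in the finite-type case, first I would further reduce so that $\mathcal{X}_\lambda$ is finite-type over $\Spec \Z$ — which it already is — and simply invoke Theorem \ref{finite flat} to produce a finite flat cover $Z_\lambda \to \mathcal{X}_\lambda$ by a scheme $Z_\lambda$ carrying an ample line bundle. Then base-change along the affine (in fact schematically dominant) morphism $\mathcal{X} \to \mathcal{X}_\lambda$: the pullback $Z := Z_\lambda \times_{\mathcal{X}_\lambda} \mathcal{X} \to \mathcal{X}$ is again finite and flat, being a base change of a finite flat morphism. It remains to see that $Z$ is still a scheme with an ample line bundle. For this, note that $Z \to Z_\lambda$ is an affine morphism (being a base change of the affine morphism $\mathcal{X} \to \mathcal{X}_\lambda$, pulled back along the representable $Z_\lambda \to \mathcal{X}_\lambda$), so $Z$ is a scheme, and the pullback of an ample line bundle along an affine morphism is ample by \cite[Tag 0892]{stacks-project}. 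Hence $Z \to \mathcal{X}$ is the desired cover.

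The main obstacle — and the step that needs the most care — is the limit/spreading-out bookkeeping: one must check that the class of hypotheses in Theorem \ref{finite flat} (finite diagonal, global quotient structure $[Z/\GL_n]$, and existence of an ample line bundle on the coarse space) all descend to some finite-type approximation $\mathcal{X}_\lambda$, and that the coarse moduli space is compatible with the limit. The finite diagonal and global-quotient structure descend by standard limit results for algebraic stacks (e.g. \cite[Theorem C, D]{RydhApproximation} together with the fact that a $\GL_n$-quotient presentation spreads out), but the ampleness of the line bundle on the coarse space requires knowing that $X = \lim X_\lambda$ as schemes and that ampleness spreads out, which is where one invokes \cite[Tag 01ZM, 0C4M]{stacks-project}. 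Everything downstream — the base change preserving finite flatness, schematicity, and ampleness — is routine.
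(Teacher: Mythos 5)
Your overall skeleton (approximate $\mathcal{X}$ by finite-type stacks, apply Theorem \ref{finite flat} at a finite stage, then base change the resulting cover along the affine morphism $\mathcal{X} \to \mathcal{X}_\lambda$, noting that finiteness, flatness, schematicity and ampleness are preserved) matches the endgame of the paper's proof. But the heart of the reduction is different, and it is exactly there that your argument has a genuine gap. You approximate $\mathcal{X}$ in a single step over $\Spec \mathbb{Z}$ and then assert that ``the coarse space commutes with this limit (or at least, one can spread out the ample line bundle on $X$ to an ample line bundle on $X_\lambda$).'' Neither assertion is justified, and neither follows from the tags you cite: the Stacks Project limit lemmas you invoke concern a fixed inverse system of \emph{schemes} with affine transition maps and let you descend properties of the limit to some finite stage, whereas here the issue is whether the coarse moduli spaces $X_\lambda$ of the approximations $\mathcal{X}_\lambda$ form such a system with $X=\lim X_\lambda$ at all. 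Coarse moduli space formation is not a finite-limit construction and its compatibility with filtered limits of stacks (even with affine, schematically dominant bonding maps and finite inertia at each stage) is a nontrivial claim requiring the Keel--Mori local structure and an argument that invariants commute with the colimit; it is not available as a citable statement in the references used here, and without it you cannot even start the spreading-out of the ample bundle. A secondary, related issue: you also need the finite-diagonal and global-quotient hypotheses to hold at a single finite stage simultaneously with the ampleness on the coarse space; arranging finite (not just affine or quasi-finite separated) diagonal at finite stages of a Theorem D approximation over $\mathbb{Z}$ itself requires the more careful manoeuvres of \cite[6.4, 6.5, 7.12]{RydhApproximation}, which you gesture at but do not carry out.

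For comparison, the paper's proof is structured precisely to avoid ever commuting coarse spaces with limits or spreading out ampleness of coarse spaces. It approximates in two stages: first it writes $\mathcal{X}=\lim_\lambda \mathcal{X}_\lambda$ \emph{relative to the fixed coarse space} $X$, so that after replacing $\mathcal{X}$ by some $\mathcal{X}_\lambda$ the map $\mathcal{X}\to X$ is a finite-type coarse moduli morphism, hence proper and quasi-finite, and then further (via \cite[7.4, 6.4, 6.5]{RydhApproximation}) reduces to $\mathcal{X}\to X$ finitely presented, quasi-finite and separated over the scheme $X$ carrying the ample bundle. Only then does it approximate $X=\lim_\alpha X_\alpha$ over $\mathbb{Z}$ and descend the \emph{stack} by pulling back, $\mathcal{X}=\mathcal{X}_\alpha\times_{X_\alpha}X$, using \cite[Tag 0CN4]{stacks-project} and \cite[6.5, 7.12]{RydhApproximation}. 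The payoff is that the coarse space of $\mathcal{X}_\alpha$ is quasi-finite and separated over $X_\alpha$, so an ample line bundle is obtained simply by pullback from $X_\alpha$ --- no commutation of coarse spaces with limits is needed. To repair your argument you would either have to prove the commutation/spreading-out statement you assume, or restructure the reduction along these lines.
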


\begin{proof} 

By \cite[Theorem D]{RydhApproximation} we may write $\mathcal{X}=\lim_{\lambda} \sX_{\lambda}$ where each $\sX_{\lambda}$ is of finite type over $X$ and the morphisms $\sX \to \sX_{\lambda}$ are affine and schematically dominant (see \cite[7.3]{RydhApproximation}). We may also suppose that each $\sX_{\lambda}$ is a global quotient stack and is separated over $X$ (see \cite[Theorem C]{RydhApproximation}). Indeed, the faithful vector bundle on $\sX$ must descend to a faithful vector bundle on some $\sX_{\lambda}$ (see the proof of Lemma \ref{char0approx}). We may also suppose that each $\sX_{\lambda}$ also has finite diagonal so that it must itself have a coarse moduli space. It immediately follows from the universal properties of coarse moduli spaces that the natural map $\sX_{\lambda} \to X$ is a coarse moduli morphism. Thus, by replacing $\sX$ by $\sX_{\lambda}$ we may assume that $\mathcal{X} \to X$ is a coarse moduli morphism which is of finite-type and thus which is proper and quasi-finite (see \cite[6.12]{RydhQuotient}). Now apply \cite[7.4]{RydhApproximation} to the finite type morphism $\sX \to X$ so we may write $\sX=\lim_{\lambda} \sX_{\lambda}$ where the bonding maps are closed immersions and each $\sX_{\lambda} \to X$ is of finite-presentation. By \cite[6.4, 6.5]{RydhApproximation} we may further assume that each $\sX_{\lambda}$ has finite diagonal and is quasi-finite over $X$. Thus, by replacing $\sX$ with one such $\sX_{\lambda}$ we may assume that $\sX$ is of finite-presentation and quasi-finite over a scheme $X$ with an ample line bundle. Note that now, $X$ no longer denotes the coarse moduli space of $\sX$.

Next, we may write $X=\lim_{\alpha} X_{\alpha}$ where $X \to X_{\alpha}$ is affine and each $X_{\alpha}$ admits an ample line bundle and is finite-type over $\Spec \mathbb{Z}$. By \cite[Tag 0CN4]{stacks-project} (or \cite[2.2]{OlssonHom}) and \cite[6.5]{RydhApproximation} we may find a stack $\sX_{\alpha} \to X_{\alpha}$ which has a finite diagonal along with the property that $\sX=\sX_{\alpha} \times_{X_{\alpha}} X$. Now since $\sX \to X$ is a global quotient stack which is quasi-finite and separated over $X$, \cite[7.12]{RydhApproximation} allows us to find an $\alpha$ so that $\sX_{\alpha} \to X_{\alpha}$ enjoys the same properties. We immediately see that there is a natural factorization
\[\sX_{\alpha} \to (\sX_{\alpha})_{cms} \to X_{\alpha}\]
Observe that $(\sX_{\alpha})_{cms} \to X_{\alpha}$ is separated as well as quasi-finite and hence the coarse moduli space of $\sX_{\alpha}$ also admits an ample line bundle. Indeed, the pullback of an ample line bundle along a separated and quasi-finite map is ample. Thus, we may now apply Theorem \ref{finite flat} to deduce the existence of a finite flat $Z \to \sX_{\alpha}$ where $Z$ is quasi-projective. We conclude by base changing $Z \to \sX_{\alpha}$ along the affine morphism $\sX \to \sX_{\alpha}$. \end{proof}

\begin{remark} \label{rydh}
David Rydh has made us aware that he has a sketched an argument for Corollary \ref{finiteflatgeneral} which is stronger than the one we have given here. Among other things, he is able to control the degree and regularity locus of the finite flat cover. Moreover, he believes it is possible to remove the use of \cite{MBSkolem} using the techniques of \cite{GGL}. 
\end{remark} 

\section{The 1-Resolution property for Stacks}

We now prove the existence of finite flat covers for stacks with the 1-resolution property. To that end, we must first establish that such a stack is separated. We begin with the following special case.

\begin{proposition}\label{finite}
Let $G$ be an affine algebraic group over $k$. Then, $BG$ has the 1-resolution property if and only if $\dim(G)=0$.
\end{proposition}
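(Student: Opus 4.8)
The plan is to prove both implications. The direction ``$\dim(G)=0$ implies $BG$ has the 1-resolution property'' is the easy one: if $G$ is finite over $k$ then the structure morphism $\Spec(k)\to BG$ is finite, faithfully flat (it is an fppf atlas for $BG$ since $G$ is finite), and $\Spec(k)$ is affine and hence has the 1-resolution property (with special vector bundle $\sO$); so $BG$ inherits the 1-resolution property by descent along finite faithfully flat maps \cite[Prop. 2.13]{GrossRes}. This is exactly the argument already recorded for the case of a finite group.

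For the converse I would argue by contraposition: assume $\dim(G)>0$ and show $BG$ cannot have the 1-resolution property. The key structural fact is that a positive-dimensional affine algebraic group over a field always contains a nontrivial one-dimensional subgroup: over an algebraically closed field, $G$ contains either $\G_a$ or $\G_m$ (a connected positive-dimensional affine group has a Borel, whose unipotent radical or maximal torus is nontrivial; in any case one extracts a closed subgroup isomorphic to $\G_a$ or $\G_m$). More carefully, passing to $k^{\mathrm{sep}}$ or $\bar k$ one obtains a closed subgroup scheme $H\subseteq G$ with $H\cong \G_a$ or $\G_m$ over the extension; I will want to arrange $H$ to be defined over a suitable field and of dimension one. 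Then $BH\to BG$ is a representable morphism, and I claim it is quasi-affine: the $2$-fibered product $\Spec(k')\times_{BG}BH$ is the quotient $G/H$ (an algebraic space / scheme, quasi-affine since $G$ is affine and $H$ is closed — indeed $G/H$ is quasi-affine when $H$ is observable, and for $H=\G_a$ or $\G_m$ closed in an affine $G$ the quotient is even quasi-affine by Chevalley-type results), so $BH\to BG$ is quasi-affine. Hence by Proposition \ref{qf} (i.e. \cite[Prop. 2.8(v)]{GrossRes}), if $BG$ had the 1-resolution property then so would $BH$. But Examples \ref{toruscase} and \ref{G_a} show that $B\G_m$ and $B\G_a$ do \emph{not} have the 1-resolution property, and the 1-resolution property is insensitive to field extension in the relevant sense (a special vector bundle on $BH_{k'}$ would pull back from a special bundle, or one argues directly over $k'$ as the examples do), a contradiction.

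The step I expect to be the main obstacle is the assertion that $BH\to BG$ is quasi-affine, equivalently that $G/H$ is a quasi-affine scheme. For $H=\G_m$ this is standard (quotients of affine groups by diagonalizable — hence linearly reductive — subgroups are affine). For $H=\G_a$ the quotient $G/H$ need not be affine, but it is always quasi-affine: $\G_a$ is a unipotent, hence observable, subgroup, and the quotient of an affine algebraic group by an observable subgroup is quasi-affine (Bialynicki-Birula--Hochschild--Mostow). Alternatively — and this is the cleaner route to insert into this paper — one can avoid the quotient entirely: reduce to $k=\bar k$, pick a one-parameter subgroup $\G_a\hookrightarrow G$ or $\G_m\hookrightarrow G$, and directly transport the obstruction. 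Namely, restriction of representations along $H\hookrightarrow G$ sends a special vector bundle $V$ on $BG$ to a representation $V|_H$ of $H$, and every $H$-representation arises as a quotient of some $(V|_H)^{\oplus n}$ because it arises (Noetherianly, after inducing and using that $k[G]$ is a union of finite-dimensional subrepresentations) as a quotient of some $V^{\oplus n}$ restricted to $H$; so $V|_H$ would be a special vector bundle on $BH$, contradicting Example \ref{toruscase} or \ref{G_a}. I would present whichever of these two formulations (quasi-affineness of $BH\to BG$, or direct restriction of representations) the authors find most economical; both rely only on results already available in the excerpt together with the elementary classification of one-dimensional subgroups of a positive-dimensional affine group.
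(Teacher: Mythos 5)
Your proposal is correct and follows essentially the same route as the paper: the ($\Leftarrow$) direction is finite flat descent via \cite[Prop. 2.13]{GrossRes}, and the ($\Rightarrow$) direction rests on producing a one-dimensional subgroup $H\in\{\G_a,\G_m\}$ with $BH\to BG$ quasi-affine, applying Proposition \ref{qf}, and contradicting Examples \ref{toruscase} and \ref{G_a}. The only real difference is organizational: the paper first rules out a positive-dimensional maximal torus (using that $G/T$ is affine, so $BT\to BG$ is affine), reduces to $G$ unipotent, and then finds $\G_a$ via the derived series, where quasi-affineness of the quotient is cheap; you instead extract $\G_a$ or $\G_m$ directly from any positive-dimensional $G$ and justify quasi-affineness of $G/\G_a$ by observability of unipotent subgroups (Bialynicki-Birula--Hochschild--Mostow), an external input that the paper's ordering avoids.
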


\begin{proof}
($\Leftarrow$) If $\dim(G)=0$, the map $\Spec(k) \to BG$ is finite and faithfully flat. Thus, the $1$-resolution property for $BG$ follows from \cite[Prop. 2.13]{GrossRes}. 

($\Rightarrow$) We may assume that $k$ is algebraically closed and $G$ reduced and connected.  Let $T \subset G$ be a maximal torus. The following diagram is cartesian,

\begin{center}
\begin{tikzcd}
G/T\arrow[r]\arrow[d] &\Spec(k)\arrow[d]\\
BT\arrow[r] & BG
\end{tikzcd}
\end{center}
\noindent As the scheme $G/T$ is affine, it follows from fppf descent that the morphism  $BT\rightarrow BG$ is affine. Hence by Proposition \ref{qf}, $BT$ has the 1-resolution property. However this is impossible if $T$ is a positive dimensional torus (see Example \ref{toruscase}).  Thus $G$ must be unipotent. Assume $\dim(G)> 0$, if possible.  By using the derived series of $G$, one can find a closed subgroup of $G$ isomorphic to $\G_a$. The inclusion  
$ \G_a\inj G$ gives rise to a quasi-affine morphism $B\G_a \to BG$. Again, by Proposition \ref{qf}, we deduce that $B\G_a$ has the $1$-resolution property, which is impossible (see Example \ref{G_a}). Thus $\dim(G)$ must be $0$. 
\end{proof}

The above result implies that stacks with the 1-resolution property have finite isotropy groups.

\begin{proposition}\label{quasi-finite}
Let $\sX$ be an algebraic stack whose stabilizers at closed points are affine. If $\sX$ has the 1-resolution property, then the inertia stack $f:I_\sX \to \sX$ is quasi-finite. Equivalently, the diagonal $\Delta:\sX \to \sX \times \sX$ is quasi-finite.
\end{proposition}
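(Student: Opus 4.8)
The plan is to show that whenever a stabilizer is positive-dimensional one can exhibit a quasi-affine morphism $B\G_m\to\sX$ or $B\G_a\to\sX$; Proposition \ref{qf} then endows $B\G_m$ (resp. $B\G_a$) with the $1$-resolution property, contradicting Example \ref{toruscase} (resp. Example \ref{G_a}). This globalizes the argument of Proposition \ref{finite}.

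First I would make some reductions. Since $I_\sX\to\sX$ is the base change of $\Delta_\sX$ along $\Delta_\sX$, and conversely the quasi-finiteness of $\Delta_\sX$ may be checked on geometric fibres (which are empty or torsors under stabilizer groups), the two assertions are equivalent, so it suffices to prove that every stabilizer $G_x=\Aut_\sX(x)$, for a point $x\colon\Spec K\to\sX$, has dimension $0$. By Theorem \ref{Grossaffine}, $\sX$ has affine diagonal; in particular each $G_x$ is an affine finite-type $K$-group scheme and $I_\sX\to\sX$ is an affine morphism, so the claim is exactly that its fibres are $0$-dimensional. As quasi-finiteness is local on the target, and since the $1$-resolution property and affine diagonal pass to open substacks, I may assume $\sX$ is quasi-compact. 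Then, $\sX$ having the resolution property, Totaro and Gross \cite[Theorem 1.1]{GrossRes} give a presentation $\sX\cong[Q/\text{GL}_n]$ with $Q$ a quasi-affine (hence separated) scheme and $\pi\colon Q\to\sX$ the associated $\text{GL}_n$-torsor — alternatively one may take $Q$ to be the frame bundle of a special vector bundle $V$ of rank $n$, which is quasi-affine since $V$ is a tensor generator (\cite[6.4]{GrossRes}).

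Next I would pull $I_\sX\to\sX$ back along the smooth surjection $\pi$: this is the stabilizer group scheme $\{(q,g)\in Q\times\text{GL}_n:\ g\cdot q=q\}\to Q$, so it is enough to show $\text{Stab}_{\text{GL}_{n,\kappa(q)}}(q)$ is finite for every $q\in Q$. Suppose some $q$ has positive-dimensional stabilizer. Replacing $\kappa(q)$ by its algebraic closure $\ell$ (harmless for fibre dimensions) I obtain $q\in Q(\ell)$ whose stabilizer $G\subseteq\text{GL}_{n,\ell}$ is positive-dimensional; its reduced identity component $G^{\circ}_{\red}$ is a smooth connected linear algebraic group of positive dimension, so — exactly as in the proof of Proposition \ref{finite}, via a maximal torus when $G^{\circ}_{\red}$ is not unipotent and via the derived series when it is — it contains a closed subgroup $H$ isomorphic to $\G_{m,\ell}$ or $\G_{a,\ell}$. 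Then $H$ is closed in $\text{GL}_{n,\ell}$ and fixes $q$, so the equivariant pair $\big(\Spec\ell\xrightarrow{q}Q,\ H\hookrightarrow\text{GL}_{n,\ell}\big)$ defines a morphism $\beta\colon BH=[\Spec\ell/H]\to[Q/\text{GL}_n]=\sX$.

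Pulling $\beta$ back along the fppf cover $\pi$ produces the $\text{GL}_n$-torsor that $\beta$ classifies, namely the homogeneous space $\text{GL}_{n,\ell}/H$ with its orbit map to $Q$. Since $\G_m$ is reductive and $\G_a$ is unipotent, $H$ is observable in $\text{GL}_{n,\ell}$, so $\text{GL}_{n,\ell}/H$ is a quasi-affine scheme (even affine when $H\cong\G_m$, by Matsushima's criterion); as $Q$ is separated, the morphism $\text{GL}_{n,\ell}/H\to Q$ is quasi-affine, and since quasi-affineness is fppf-local on the target, $\beta$ is quasi-affine. Proposition \ref{qf} then forces $BH$ to have the $1$-resolution property, contradicting Example \ref{toruscase} (if $H\cong\G_m$) or Example \ref{G_a} (if $H\cong\G_a$) — equivalently, contradicting Proposition \ref{finite}. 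Hence every stabilizer is finite and $I_\sX\to\sX$, equivalently $\Delta_\sX$, is quasi-finite. I expect the only genuinely delicate step to be the choice of $H$: the stabilizer $G$ itself need not map quasi-affinely to $\sX$ (it could be, say, a Borel subgroup of $\text{GL}_n$, for which $\text{GL}_n/G$ is projective), so one is forced to descend to a subgroup observable in $\text{GL}_n$, and a copy of $\G_m$ or $\G_a$ is exactly what the structure theory supplies and what Examples \ref{toruscase} and \ref{G_a} rule out; the identification $\beta^{-1}(Q)\cong\text{GL}_n/H$, the fppf descent of quasi-affineness, and the reductions to the quasi-compact case are routine.
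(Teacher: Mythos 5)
Your argument is essentially correct, but it takes a genuinely different route from the paper. The paper's proof is much shorter: for an arbitrary point $\xi$ it invokes the fact that the residual gerbe $\sG_\xi \hookrightarrow \sX$ is quasi-affine (\cite[Theorem B.2]{Rydh11}), transfers the $1$-resolution property to $\sG_\xi$ via Proposition \ref{qf}, and then quotes Proposition \ref{finite} wholesale to conclude the stabilizer is finite; no presentation of $\sX$ is chosen. You instead use the Totaro--Gross presentation $\sX\cong[Q/\mathrm{GL}_n]$ with $Q$ quasi-affine, locate a copy of $\G_m$ or $\G_a$ inside a putative positive-dimensional stabilizer, and manufacture a quasi-affine morphism $BH\to\sX$ by hand, using observability/Matsushima to see that $\mathrm{GL}_n/H$ is quasi-affine, the separatedness of $Q$ to see $\mathrm{GL}_n/H\to Q$ is quasi-affine, and fppf descent of quasi-affineness; this in effect re-proves the relevant half of Proposition \ref{finite} at the level of the presentation rather than reusing it through residual gerbes. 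What your route buys is independence from Rydh's residual-gerbe theorem, at the cost of the global quotient presentation (which requires $\sX$ quasi-compact) and some classical group theory; what the paper's route buys is brevity and applicability point-by-point without any presentation. Two small cautions: Theorem \ref{Grossaffine} is stated for qcqs stacks, so you should either assume $\sX$ quasi-compact from the start (harmless, since the proposition is applied in that setting, e.g.\ in Lemma \ref{L1}) or perform your open-substack reduction before citing it, and in the latter case note that ``affine stabilizers at closed points'' of a quasi-compact open substack is not literally inherited from the hypothesis on $\sX$ (a gloss the paper's own proof also makes when applying Proposition \ref{finite} at a non-closed point). With $\sX$ quasi-compact these issues disappear and your proof goes through.
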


\begin{proof} The inertia stack is of finite type since this is true of the diagonal $\Delta$ (see \cite[4.2]{LMB}). Thus, to show that $f$ is quasi-finite, it is enough to show that it has finite fibres. Let $\xi: \Spec(k) \to \sX$ be a point.
We need to show that the stabilizer group at $\xi$ is finite. Let $i: \sG_{\xi}\inj \sX$ be the residual gerbe at $\xi$. Then $i:\sG_{\xi}\rightarrow\sX$ is quasi-affine \cite[Theorem B.2]{Rydh11} and by Proposition \ref{qf}, $\sG_{\xi}$ has the $1$-resolution property. Now, finiteness of the stabilizer group at $\xi$ follows directly from Proposition \ref{finite}.
\end{proof}

The following is a refinement of \cite[Lemma 6.1]{Tot}.
\begin{lemma} \label{schemecover}
Let $\sX$ be an algebraic stack and $a:X\to \sX$ be an integral surjective map from a scheme $X$. Then if $X$ is separated then $\sX$ is as well.
\end{lemma}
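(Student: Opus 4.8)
The plan is to reduce the statement to a question about the valuative criterion of separatedness, exactly as in the proof of Lemma \ref{separated}, using the integral surjection $a\colon X \to \sX$ to transport separatedness information from $X$ down to $\sX$. First I would recall that separatedness of $\sX$ is equivalent to the diagonal $\Delta_\sX\colon \sX \to \sX \times \sX$ being proper (it is already of finite type and, since all our stacks have separated quasi-compact diagonals, it is quasi-compact and separated), so it suffices to check the valuative criterion of properness for $\Delta_\sX$. Concretely, given a valuation ring $R$ with fraction field $K$ and a commutative square expressing a $K$-point of $\sX$ together with an $R$-point of $\sX \times \sX$ lifting it, I must produce a unique dotted lift $\Spec R \to \sX$; uniqueness is automatic once existence is shown because the diagonal is already separated (equivalently, we may phrase everything in terms of isomorphisms between two $R$-points of $\sX$ agreeing over $K$). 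So the real content is: two maps $g_1, g_2\colon \Spec R \to \sX$ and an isomorphism $\phi\colon g_1|_{\Spec K} \xrightarrow{\ \sim\ } g_2|_{\Spec K}$ must extend to an isomorphism over $\Spec R$.

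Next I would pull the picture back along $a$. After replacing $R$ by an extension of valuation rings (which is harmless for the valuative criterion since we are allowed to refine $R$ — this is where surjectivity and integrality of $a$ enter, via the fact that $X \to \sX$ is universally closed and surjective, so the $\Spec R$-points lift after a dominating extension of valuation rings, cf.\ \cite[Tag 0CM0]{stacks-project}-type arguments), I may assume that $g_1$ and $g_2$ both lift to maps $\tilde g_1, \tilde g_2\colon \Spec R \to X$. The isomorphism $\phi$ over $\Spec K$ lives in the fiber product; concretely the $\Spec K$-point of $X \times_\sX X$ determined by $(\tilde g_1|_K, \tilde g_2|_K, \phi)$ gives, after composing with the integral (hence separated) projection $X \times_\sX X \to X \times X$, a $\Spec K$-point of $X \times X$ which extends to an $\Spec R$-point because $X$ is separated (both $\tilde g_i$ are defined over $R$). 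Now I invoke that $a$ is integral, hence $X \times_\sX X \to X \times X$ is integral, hence universally closed and separated: the $\Spec K$-point of $X \times_\sX X$ then extends uniquely to an $\Spec R$-point of $X \times_\sX X$ by the valuative criterion for the integral (in particular proper-ish: universally closed + quasi-compact + separated suffices for existence and uniqueness of the lift against a valuation ring) morphism $X\times_\sX X \to X\times X$, using the $\Spec R$-point $(\tilde g_1, \tilde g_2)$ of $X \times X$ as the base. This extended point is precisely an isomorphism $\tilde g_1 \xrightarrow{\sim} \tilde g_2$ over $\Spec R$ compatible with $a$, and pushing forward via $a$ gives the desired isomorphism $g_1 \xrightarrow{\sim} g_2$ over $\Spec R$, completing the verification of the valuative criterion for $\Delta_\sX$.

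The main obstacle I anticipate is bookkeeping with the extensions of valuation rings: lifting $R$-points of $\sX$ to $R$-points of $X$ genuinely requires enlarging $R$ (an integral surjection need not admit sections even fppf-locally on the nose), and one must be careful that this enlargement is still a valuation ring and that the conclusion over the enlarged $R$ descends to give the conclusion over the original $R$ — the latter is fine because an isomorphism of two $R$-points that exists after faithfully flat (indeed after a dominating valuation extension) base change descends, since the diagonal of $\sX$ is already separated so Isom-sheaves satisfy effective descent here. A secondary point to be careful about is that we only know $X$ is separated, not proper over anything, so I cannot directly apply the valuative criterion of properness to $X$; rather I use separatedness of $X$ only to \emph{extend} the $\Spec K$-point of $X\times X$ (where uniqueness comes for free) and I use the \emph{integrality} of $a$ — hence universal closedness of $X\times_\sX X \to X\times X$ — to do the actual \emph{extension} of the point of $X\times_\sX X$. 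Everything else (the inertia/Isom-stack reformulation, finite type and quasi-compactness of the diagonal) is routine given the standing hypotheses in the excerpt, so I would not belabor it.
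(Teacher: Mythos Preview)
Your overall strategy via the valuative criterion is sound, but there is a genuine gap at the key step. You assert that ``$a$ is integral, hence $X \times_\sX X \to X \times X$ is integral.'' This inference is unjustified: the morphism $X \times_\sX X \to X \times X$ is the base change of the diagonal $\Delta_\sX\colon \sX \to \sX \times \sX$ along $X \times X \to \sX \times \sX$, not a base change of $a$. Asserting that it is integral (in particular universally closed) therefore presupposes properness of $\Delta_\sX$, which is precisely what you are trying to prove.

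The repair is immediate and collapses your argument into the paper's. What \emph{is} a base change of $a$ is either projection $p_i\colon X \times_\sX X \to X$; these are integral. Given your $K$-point of $X \times_\sX X$ and the $R$-point $\tilde g_1$ of $X$, apply the valuative criterion to the integral morphism $p_1$ to obtain an $R$-point $h$ of $X \times_\sX X$ with $p_1(h)=\tilde g_1$. Then $p_2(h)$ and $\tilde g_2$ are two $R$-points of the separated scheme $X$ agreeing over $K$, so $p_2(h)=\tilde g_2$ and $h$ lies over $(\tilde g_1,\tilde g_2)$ as desired. Unwound, this is exactly the paper's argument: lift the $K$-point $x$ to $\tilde x\colon \Spec K \to X$, use that the integral map $a$ satisfies the existence part of the valuative criterion (\cite[Tag 0CLX]{stacks-project}) to lift each $f_i$ to $\tilde f_i\colon \Spec R \to X$ through $\tilde x$ (after an extension of $R$), and invoke separatedness of $X$ to force $\tilde f_1=\tilde f_2$, contradicting $f_1\not\cong f_2$.
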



\begin{proof}

Assume, if possible that $\sX$ is not separated. Then by the valuative criterion  there exists a valuation ring $R$, with two distinct maps $f_1,f_2:\Spec(R)\rightarrow \sX$ making the following diagram commute:

\begin{center}
\begin{tikzcd}
\Spec(K)\arrow[d]\arrow[r,"x"] &\sX\\
\Spec(R)\arrow[ru, dotted, "f_1",shift right=1.5ex]\arrow[ru,dotted,"f_2"']
\end{tikzcd}
\end{center}

\noindent Here, $K\supseteq R$ is the field of fractions of $R$. Without loss of generality, by going to an extension of $K$, we may assume that the map $x:\Spec(K) \to \sX$ lifts to give a map 
$\tilde{x}: \Spec(K) \to X$. Since the map $a:X\to \sX$ is integral and hence universally closed and affine, it satisfies the existence part of the valuative criterion (see \cite[Tag 0CLX]{stacks-project}). Thus, the maps $f_1,f_2$ also lift to give a commutative diagram (after going to an extension of $R$),

\begin{center}
\begin{tikzcd}
\Spec(K)\arrow[d]\arrow[r,"\tilde{x}"] & X\\
\Spec(R)\arrow[ru, dotted, "\tilde{f}_1",shift right=1.5ex]\arrow[ru,dotted,"\tilde{f}_2"']
\end{tikzcd}
\end{center}
However this contradicts the separatedness of $X$. Hence $\sX$ must be separated. 
\end{proof}

\begin{lemma}\label{L1}
Let $\sX$ be a qcqs algebraic stack whose stabilizer groups at closed points are affine. If $\sX$ has the 1-resolution property then $\sX$ is separated, and hence has finite diagonal.
\end{lemma}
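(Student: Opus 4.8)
The plan is to deduce the statement formally from results already established, using a finite cover by a scheme to reduce to the algebraic space case of Lemma \ref{separated}.

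\emph{Step 1: the diagonal.} Since the $1$-resolution property implies the resolution property, Theorem \ref{Grossaffine} shows that $\Delta_\sX\colon\sX\to\sX\times_\Z\sX$ is affine, hence in particular separated; thus $\sX$ has separated diagonal. By Proposition \ref{quasi-finite}, $\Delta_\sX$ is moreover quasi-finite. So $\sX$ is a qcqs algebraic stack with quasi-finite and separated diagonal.

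\emph{Step 2: a finite scheme cover and descent of separatedness.} For a qcqs stack with quasi-finite and separated diagonal one can find a finite surjective morphism $a\colon X\to\sX$ from a qcqs scheme --- the analogue for stacks of the fact, used already in the proof of Lemma \ref{separated}, that a qcqs algebraic space admits a finite cover by a qcqs scheme (see \cite{RydhApproximation}). Since $a$ is finite it is quasi-affine, so by Proposition \ref{qf} the scheme $X$ inherits the $1$-resolution property, and hence is separated by Lemma \ref{separated}. As $a$ is integral and surjective, Lemma \ref{schemecover} then gives that $\sX$ is separated.

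\emph{Step 3: finiteness of the diagonal.} Separatedness of $\sX$ means $\Delta_\sX$ is proper; being also quasi-finite by Proposition \ref{quasi-finite}, and representable, it is therefore finite, which is the last assertion.

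I do not expect a genuine obstacle at this stage: all the real work has been done in Proposition \ref{finite} and Proposition \ref{quasi-finite} (which pin down the stabilizers via the computations for $B\G_a$ and $B\G_m$ and the passage to residual gerbes) and in Lemma \ref{separated}, and what remains above is assembly. The only external ingredient is the existence of the finite scheme cover in Step 2. If one prefers not to quote it directly for stacks, note that the proof of Lemma \ref{schemecover} works verbatim with $X$ an algebraic space, so it suffices to produce a finite surjective cover of $\sX$ by a \emph{separated} algebraic space; and such a cover is obtained by first covering $\sX$ finitely by an algebraic space $X'$ and then invoking Proposition \ref{qf} together with Lemma \ref{separated} to see that $X'$ is separated.
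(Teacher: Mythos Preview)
Your proof is correct and follows essentially the same route as the paper's: use Proposition~\ref{quasi-finite} to get a quasi-finite diagonal, invoke \cite[Theorem B]{RydhApproximation} to obtain a finite surjective scheme cover, transfer the $1$-resolution property via Proposition~\ref{qf}, apply Lemma~\ref{separated} to see the cover is separated, and conclude with Lemma~\ref{schemecover}. You have simply made explicit the affine-diagonal step and the deduction of finiteness of the diagonal from properness plus quasi-finiteness, both of which the paper leaves implicit.
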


\begin{proof} Since the diagonal $\Delta:\sX\rightarrow \sX\times\sX$ is quasi-finite by Proposition \ref{quasi-finite},   $\sX$ admits a finite surjective map $a:Z\rightarrow\sX$ where $Z$ is a scheme (see \cite[Theorem B]{RydhApproximation}). Then, by Proposition \ref{qf}, $Z$ also has the $1$-resolution property and hence is separated by Lemma \ref{separated}. This shows that $\sX$ is separated by Lemma \ref{schemecover}. 
\end{proof}

We now finish with the proofs of Theorems \ref{T} and \ref{char0}.

\begin{proof}[\textbf{Proof of Theorem \ref{T}(2) \& Theorem \ref{char0}(2)}] If $\sX$ lies over $\Spec \mathbb{Q}$ then we may use Lemma \ref{char0approx} to produce an affine morphism $\sX \to \sX_{\alpha}$ where $\sX_{\alpha}$ is a normal algebraic stack which has the $1$-resolution property and is finite-type over $\mathbb{Q}$. If we show the result for $\sX_{\alpha}$ then by pulling back the quasi-affine cover by the affine morphism $\sX \to \sX_{\alpha}$ we may conclude. Thus, it suffices to assume that $\sX$ is finite-type over a Noetherian excellent base.

Since the diagonal of $\sX$ is affine (see \cite[1.1]{GrossRes}) and quasi-finite, there is a finite surjective morphism $Z \to \sX$ (see \cite[Theorem B]{RydhApproximation}) where $Z$ is a scheme. After replacing $Z$ with its normalization we may apply \ref{char0}(1) or \ref{T}(1) to deduce that $Z$ is quasi-affine. Note that the diagonal of $\sX$ is proper by Lemma \ref{L1} and quasi-finite. This means $\sX$ admits a coarse moduli space $X$ (see\cite[Theorem 6.12]{RydhQuotient}) and since $\sX$ is normal so is $X$. Indeed, since coarse moduli spaces are geometric quotients, the normality of $X$ follows from the proof of \cite[Theorem 4.16(viii)]{goodmodulispaces}. The morphism $Z \to X$ is integral and surjective and since $Z$ admits an ample line bundle, \cite[3.5]{GrossRes} implies that $X$ must also have an ample line bundle. Since $\sX$ is a global quotient stack (see \cite[5.10]{GrossRes}) with quasi-projective coarse space, we obtain a finite flat morphism $Z' \to \sX$ where $Z'$ admits an ample line bundle (see Corollary \ref{finiteflatgeneral}). Since $Z'$ inherits the $1$-resolution property from $\sX$ and has an ample line bundle, it must be quasi-affine by Example \ref{quasi-proj}.
\end{proof}

\begin{remark} \label{normalstack} In the proof above, the normality of $\sX$ is necessary in order to descend an ample line bundle along the map $Z \to X$. Indeed, to apply Corollary \ref{finiteflatgeneral} to $\sX$ we require an ample line bundle on the coarse moduli space $X$. However, positive answers to Question \ref{Pushout} \emph{and} the following question would allow us to generalize Theorem \ref{T}(2) and Theorem \ref{char0}(2) to the non-normal setting. \end{remark}

\begin{q} \label{nonnormalstack} Let $\sX$ be an algebraic stack which is finite-type over an excellent base and suppose $\sX$ has the 1-resolution property. Does the coarse moduli space of $\sX$ have the 1-resolution property? \end{q}


\bibliography{mybib}{}
	\bibliographystyle{plain}

\end{document}